\DeclareFontFamily{U}{mathx}{\hyphenchar\font45}
\DeclareFontShape{U}{mathx}{m}{n}{
  <5> <6> <7> <8> <9> <10> gen * mathx
  <10.95> mathx10 <12> <14.4> <17.28> <20.74> <24.88> mathx12
  }{}
\DeclareSymbolFont{mathxSymbols}{U}{mathx}{m}{n}
\DeclareMathSymbol{\intop}{1}{mathxSymbols}{"B3} 
\DeclareMathSymbol{\ointop}{1}{mathxSymbols}{"B6} 
\begin{document}

\title{\LARGE{Instability of solitary waves
for a generalized derivative nonlinear Schr\"odinger equation \\
in a borderline case}}
\author{Noriyoshi Fukaya\thanks
{\textit{E-mail address:}
\texttt{noriyoshi\_fukaya@alumni.tus.ac.jp}}}
\date{}
\dedication{\normalsize{Department of Mathematics,
Graduate School of Science,
Tokyo University of Science,
1-3 Kagurazaka,
Shinjuku-ku,
Tokyo 162-8601,
Japan}}
\maketitle
\begin{abstract}
We study the orbital instability of solitary waves for a generalized derivative nonlinear Schr\"odinger equation.
We give sufficient conditions for instability of a two-parameter family of solitary waves in a degenerate case.
\end{abstract}

\theoremstyle{definition}
\newtheorem{definition}{Definition}
\newtheorem{remark}{Remark}
\theoremstyle{plain}
\newtheorem{theorem}{Theorem}
\newtheorem{proposition}{Proposition}
\newtheorem{lemma}{Lemma}
\let\mb\mathbb
\let\ve\varepsilon
\let\vp\varphi
\let\ol\overline
\let\pt\partial

\renewcommand{\labelenumi}{\textup{(\roman{enumi})}}

\section{Introduction} \label{sec:in}

In this paper, we consider the following generalized derivative nonlinear Schr\"odinger equation
\begin{equation} \label{gDNLS}
i\pt_tu
+\pt_x^2u
+i|u|^{2\sigma}\pt_xu
=0,\quad
(t,x)\in\mb R\times\mb R,
\end{equation}
where $u$ is a complex-valued function of 
$(t,x)\in\mb R\times\mb R$ and $\sigma\ge1$.
Eq.\ \eqref{gDNLS} appears in plasma physics, 
nonlinear optics, and so on (see, e.g., \cite{MOMT76, Mjolhus76}).

It is known that \eqref{gDNLS} has a two-parameter family of solitary waves
\begin{equation} \label{sw}
u_\omega(t,x)
=e^{i\omega_0t}\phi_\omega(x-\omega_1t),
\end{equation}
where 
$\omega=(\omega_0,\omega_1)\in\Omega
:=\{\,(\omega_0,\omega_1)\in\mb R^2
\mid\omega_1^2<4\omega_0\,\}$,
\begin{align*}
\phi_\omega(x)
&=\vp_\omega(x)\exp i\left(
\frac{\omega_1}2x
-\frac1{2\sigma+2}\int_{-\infty}^x\vp_\omega(y)^{2\sigma}\,dy
\right), \\
\vp_\omega(x)
&=\left\{
\frac{(\sigma+1)(4\omega_0-\omega_1^2)}
{2\sqrt{\omega_0}\cosh(\sigma\sqrt{4\omega_0-\omega_1^2}\,x)-\omega_1}
\right\}^{1/2\sigma}.
\end{align*}
We note that $\phi_\omega$ is a solution of 
\begin{equation} \label{SP}
-\pt_x^2\phi
+\omega_0\phi
+\omega_1i\pt_x\phi
-i|\phi|^{2\sigma}\pt_x\phi
=0,\quad x\in\mb R.
\end{equation}

We regard $L^2(\mb R)=L^2(\mb R,\mb C)$ and $H^1(\mb R)=H^1(\mb R,\mb C)$
as real Hilbert spaces with inner products
\begin{align*}
&(v,w)_{L^2}
=\Re\int_\mb R v(x)\ol{w(x)}\,dx,\quad
v,w\in L^2(\mb R), \\
&(v,w)_{H^1}
=(v,w)_{L^2}+(\pt_xv,\pt_xw)_{L^2},\quad
v,w\in H^1(\mb R).
\end{align*}

Recently, Hayashi and Ozawa \cite{HOp} proved that the Cauchy problem for \eqref{gDNLS} is locally well-posed in the energy space $H^1(\mb R)$ for all $\sigma\ge1$ (see also \cite{HO92, Hayashi93, HO94}).
Moreover, \eqref{gDNLS} has three conserved quantities
\begin{align*}
&E(u)
=\frac12\|\pt_xu\|_{L^2}^2
-\frac1{2(\sigma+1)}(i\pt_xu,|u|^{2\sigma}u)_{L^2}, \\
&Q_0(u)
=\frac12\|u\|_{L^2}^2,\quad
Q_1(u)
=\frac12(i\pt_xu,u)_{L^2}.
\end{align*}
Note that \eqref{gDNLS} can be written in Hamiltonian form 
$i\pt_tu(t)=E'(u(t))$.

For $\omega\in\Omega$ and $u\in H^1(\mb R)$, we define 
\[
S_\omega(u)
=E(u)+\sum_{j=0}^1\omega_jQ_j(u).
\]
Then \eqref{SP} is equivalent to $S_\omega'(\phi)=0$.
For $\omega\in\Omega$, let
$d(\omega)
=S_\omega(\phi_\omega)$.
Then,
\begin{align} \notag
d'(\omega)
&=(\pt_{\omega_0}d(\omega),\pt_{\omega_1}d(\omega))
=(Q_0(\phi_\omega),Q_1(\phi_\omega)), \\
d''(\omega)
&=\begin{bmatrix}
\pt_{\omega_0}^2d(\omega) & \pt_{\omega_0}\pt_{\omega_1}d(\omega) \\
\pt_{\omega_1}\pt_{\omega_0}d(\omega) & \pt_{\omega_1}^2d(\omega)
\end{bmatrix}
=\begin{bmatrix} \label{Hd}
\langle Q'_0(\phi_\omega),\pt_{\omega_0}\phi_\omega\rangle &
\langle Q'_1(\phi_\omega),\pt_{\omega_0}\phi_\omega\rangle \\
\langle Q'_0(\phi_\omega),\pt_{\omega_1}\phi_\omega\rangle &
\langle Q'_1(\phi_\omega),\pt_{\omega_1}\phi_\omega\rangle
\end{bmatrix}.
\end{align}

The stability of solitary waves is defined as follows.
\begin{definition}
The solitary wave $e^{i\omega_0t}\phi_\omega(\cdot-\omega_1t)$ is said to be \textit{stable} if for each $\ve>0$ there exists $\delta>0$ with the following property.
If $u_0\in H^1(\mb R)$ and $\|u_0-\phi_\omega\|_{H^1}<\delta$,
then the solution $u(t)$ of \eqref{gDNLS} with $u(0)=u_0$ exists for all $t\ge0$, 
and $u(t)\in U_\ve(\phi_\omega)$ for all $t\ge0$,
where
\[
U_\ve(\phi_\omega)
=\{\,u\in H^1(\mb R)\mid
\inf_{(s_0,s_1)\in\mb R^2}\|u-e^{is_0}\phi_\omega(\cdot-s_1)\|_{H^1}
<\ve\,\}.
\]
Otherwise, $T(\omega t)\phi$ is said to be \textit{unstable}.
\end{definition} 
For the case $\sigma=1$, 
Guo and Wu \cite{GW95} showed that the solitary wave $e^{i\omega_0t}\phi_\omega(\cdot-\omega_1t)$ is stable for $\omega\in\Omega$ with $\omega_1<0$,
and Colin and Ohta \cite{CO06} proved that the solitary wave $e^{i\omega_0t}\phi_\omega(\cdot-\omega_1t)$ is stable for all $\omega\in\Omega$.
For general exponents $\sigma>1$, 
Liu, Simpson and Sulem \cite{LSS13} proved that  for all $\sigma\ge2$ and $\omega\in\Omega$, 
the solitary wave $e^{i\omega_0t}\phi_\omega(\cdot-\omega_1t)$ is unstable.
In \cite{LSS13},
they also proved for $1<\sigma<2$ the solitary wave $e^{i\omega_0t}\phi_\omega(\cdot-\omega_1t)$ is stable if $-2\sqrt{\omega_0}<\omega_1<2z_0\sqrt{\omega_0}$, 
and unstable if $2z_0\sqrt{\omega_0}<\omega_1<2\sqrt{\omega_0}$,
where the constant $z_0=z_0(\sigma)\in(-1,1)$ is the solution of
\begin{align*}
F_\sigma(z)
&:=(\sigma-1)^2\left[
\int_0^\infty(\cosh y-z)^{-1/\sigma}\,dy
\right]^2 \\
&\qquad-\left[
\int_0^\infty(\cosh y-z)^{-1/\sigma-1}(z\cosh y-1)\,dy
\right]^2
=0.
\end{align*}
We note that $\det[d''(\omega)]$ has the same sign as 
$F_\sigma(\omega_1/2\sqrt{\omega_0})$ (see \cite[Lemma 4.2]{LSS13}).
In \cite{LSS13},
it is showed by numerical computation
that for $1<\sigma<2$, 
the function $F_\sigma$ is monotonically increasing,
$F_\sigma(-1)<0$ and $\lim_{z\uparrow1}F_\sigma(z)=+\infty$,
and that $F_\sigma$ has exactly one root $z_0$ in the interval $(-1,1)$.
The proof in \cite{LSS13} is based on the spectral analysis of the linearized operator $S_\omega''(\phi_\omega)$
and the Hessian matrix $d''(\omega)$,
and the general theory of Grillakis, Shatah and Strauss \cite{GSS2}.
However, the stability problem in the case 
$\omega_1=2z_0\sqrt{\omega_0}$ is open
because the Hessian matrix $d''(\omega)$ is degenerate.
While there are several papers treating the stability and instability of a one-parameter family of solitary waves in degenerate cases
(see \cite{CP03, Ohta11, CO12, Maeda12, Yamazaki15}),
to the best of our knowledge,
there are none for a two-parameter family of solitary waves.

In this paper, we consider the borderline case 
$\omega_1=2z_0\sqrt{\omega_0}$ 
and prove the following.

\begin{theorem} \label{mt}
Let $3/2\le\sigma<2$ and $z_0=z_0(\sigma)\in(-1,1)$ satisfy $F_\sigma(z_0)=0$.
Then the solitary wave $e^{i\omega_0t}\phi_\omega(\cdot-\omega_1t)$ is unstable if
$\omega_1=2z_0\sqrt{\omega_0}$.
\end{theorem}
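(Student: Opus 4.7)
The plan is to adapt the degenerate-case instability strategy developed by Ohta~\cite{Ohta11} and related works \cite{CP03, Maeda12, Yamazaki15} from one-parameter to two-parameter solitary-wave families. The non-degenerate spectral ingredients from \cite{LSS13} remain valid at the borderline $\omega_1=2z_0\sqrt{\omega_0}$: the linearized operator $L_\omega:=S_\omega''(\phi_\omega)$ has exactly one simple negative eigenvalue and its kernel (modulo the symmetries) is spanned by $\{i\phi_\omega,\pt_x\phi_\omega\}$. What is new is the need for third-order information on $d$ along the null direction of $d''(\omega)$.

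The first step is to identify the critical direction. Since $d''(\omega)$ has rank $1$ at the borderline (one positive eigenvalue, one zero), pick a nonzero $\chi=(\chi_0,\chi_1)\in\ker d''(\omega)$ and set $\psi:=\chi_0\pt_{\omega_0}\phi_\omega+\chi_1\pt_{\omega_1}\phi_\omega$. Differentiating $S_\omega'(\phi_\omega)=0$ in $\omega$ gives $L_\omega\psi=-\chi_0Q_0'(\phi_\omega)-\chi_1Q_1'(\phi_\omega)$, and the defining relation $d''(\omega)\chi=0$ yields $\langle Q_j'(\phi_\omega),\psi\rangle=0$ for $j=0,1$ and hence $\langle L_\omega\psi,\psi\rangle=0$. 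Thus $\psi$ is precisely the direction in which the quadratic Grillakis--Shatah--Strauss argument collapses.

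The second step is to show that the scalar
\[
\kappa:=\left.\frac{d^3}{ds^3}\right|_{s=0}d(\omega+s\chi)
\]
is nonzero, with the sign appropriate for instability, when $3/2\le\sigma<2$. Using the explicit form of $\vp_\omega$, this reduces to a finite combination of integrals of the form $\int_0^\infty(\cosh y-z_0)^{-1/\sigma-k}(\cosh y)^\ell\,dy$ together with the identity $F_\sigma(z_0)=0$ and the monotonicity of $F_\sigma$ used in \cite{LSS13}; the restriction $3/2\le\sigma<2$ should enter precisely here. The final step is to set up a tubular modulation $u(t)=e^{is_0(t)}\phi_{\omega(t)}(\cdot-s_1(t))+\eta(t)$ with four orthogonality conditions uniquely determining $(s_0,s_1,\omega_0,\omega_1)(t)$ and $\eta(t)$, to define a Lyapunov-type functional $A(u)$ sensitive to the $\chi$-component of $\omega(t)$, and to extract, modulo higher-order remainders, a virial-type inequality of the form $\ddot A\gtrsim|\kappa|$ that forces exit of $u(t)$ from any sufficiently small $U_\ve(\phi_\omega)$.

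The hardest part is expected to be the modulation bookkeeping. Unlike the one-parameter degenerate case, $\omega(t)\in\mb R^2$ has two components: the positive eigendirection of $d''(\omega)$ provides coercivity for the $\eta$-equation through $L_\omega$ restricted to the symplectic orthogonal complement, while the null direction $\chi$ must be seen to be driven by $\kappa$ at cubic order rather than absorbed into the modulation remainders. Separating these two components cleanly --- and in particular closing the estimates so that the quadratic contribution from the positive eigendirection does not overwhelm the cubic drift along $\chi$ --- is where the two-parameter setting requires genuinely new care beyond \cite{Ohta11}, and where most of the technical work will concentrate.
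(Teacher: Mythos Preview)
Your outline is in the right spirit --- the paper does proceed via a third-derivative nondegeneracy condition together with a Lyapunov functional in the style of Ohta and Maeda --- but two concrete points are wrong, and your final step diverges from what the paper actually does.

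First, the restriction $3/2\le\sigma<2$ does \emph{not} enter in the computation of $\kappa$. The paper's Lemma~\ref{d3} establishes $\frac{d^3}{d\lambda^3}d(\omega+\lambda\xi)\big|_{\lambda=0}\ne0$ for the full range $1<\sigma<2$; the proof is a direct (if tedious) calculation from the explicit formulas for $\pt_{\omega_j}^k d(\omega)$, and the sign of the third derivative need not be pinned down --- nonvanishing suffices. The lower bound $\sigma\ge3/2$ is needed only so that $E\in C^3(H^1,\mb R)$, which is what makes the third-order Taylor expansions of $S_\omega$ and of the functional $P$ legitimate.

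Second, your description of $d''(\omega)$ is backwards: at the borderline the nonzero eigenvalue is \emph{negative}, not positive. Since $S_\omega''(\phi_\omega)$ has a single negative eigenvalue (Lemma~\ref{sc}), the Hessian $d''(\omega)$ cannot have two nonnegative eigenvalues (this is the remark following Lemma~\ref{sc}), so at $\det d''(\omega)=0$ the spectrum is $\{0,\text{negative}\}$. Consequently there is no ``positive eigendirection of $d''(\omega)$'' to supply the coercivity you describe; your proposed mechanism for controlling the non-null parameter direction does not exist as stated.

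The paper sidesteps this entirely by \emph{not} modulating $\omega$. Instead it fixes $\omega$, builds a curve $\Psi(\lambda)=\phi_\omega+\lambda\psi+\rho(\lambda)B_\xi\phi_\omega$ constrained to the level set $Q_\xi(\cdot)=Q_\xi(\phi_\omega)$, and decomposes $T(\alpha(u))u=\Psi(\Lambda(u))+\beta(u)B_\xi\phi_\omega+w(u)$ with four orthogonality conditions on $w$. Coercivity of $S_\omega''(\phi_\omega)$ on $w$ comes from Lemma~\ref{sc} together with the extra constraint $(w,B_\xi\phi_\omega)_{L^2}=0$ (Proposition~\ref{ud}(ii)), not from any sign in $d''$. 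The Lyapunov functional is $A(u)=-(T(\alpha(u))u,i\psi)_{L^2}$, and the contradiction is obtained from the \emph{first}-order identity $\frac{d}{dt}A(u(t))=P(u(t))$ with $P$ bounded below by a positive constant along the flow (a monotonicity argument), rather than from a second-order virial inequality $\ddot A\gtrsim|\kappa|$ as you propose.
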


\begin{remark}
If $3/2\le\sigma<2$, then $E\in C^3(H^1(\mb R),\mb R)$,
but if $1<\sigma<3/2$, $E$ is not $C^3$.
\end{remark}

The proof of Theorem \ref{mt} is based on similar arguments of Ohta \cite{Ohta11, Ohta14} and Maeda \cite{Maeda12}.
In our case $\omega_1=2z_0\sqrt{\omega_0}$,
it can be proved that if $\xi\in\mb R^2$ is an eigenvector of $d''(\omega)$ corresponding to the zero eigenvalue,
then $\frac{d^3}{d\lambda^3}d(\omega+\lambda\xi)|_{\lambda=0}\ne0$.
Hence, we can prove instability by the Lyapunov functional methods like \cite{Ohta11, Maeda12, Ohta14}.

The rest of this paper is organized as follows.
In section \ref{sec:sc}, 
we give a sufficient condition for instability in degenerate cases
and show that this condition holds in our cases.
In section \ref{sec:prf},
we prove this condition implies instability of the solitary wave 
$e^{i\omega_0t}\phi_\omega(\cdot-\omega_1t)$.

\section{Sufficient condition for instability} \label{sec:sc}

For $s=(s_0,s_1)\in\mb R^2$ and $v\in H^1(\mb R)$, we define
\[
T(s)v=T_0(s_0)T_1(s_1)v
=e^{is_0}v(\cdot-s_1).
\]
Then, the generator $T_j'(0)$ of $\{T_j(s)\}_{s\in\mb R}$ is given by
\[
T_0'(0)v=iv,\quad
T_1'(0)v=-\pt_xv,\quad
v\in H^1(\mb R).
\]
We define the bounded linear operator $B_j$ from $H^1(\mb R)$ to $L^2(\mb R)$ by
\[
B_jv
=-iT_j'(0)v.
\]
Then we have $Q_j'(v)=B_jv$.
Note that $E$ and $Q_j$ are invariant under $T$, that is,
\[
E(T(s)u)=E(u),\quad
Q_j(T(s)u)=Q_j(u).
\]
These and $S_\omega'(\phi_\omega)=0$ imply that $S_\omega''(\phi_\omega)T_j'(0)\phi_\omega=0$ for $j=0$, $1$.

For $\xi\in\mb R^2$, let
\[
B_\xi v
=\sum_{j=0}^1\xi_jB_jv,\quad
v\in H^1(\mb R).
\]

In this section,
we prove the following.

\begin{proposition} \label{ud}
Let $3/2\le\sigma<2$ and 
$z_0=z_0(\sigma)\in(-1,1)$ satisfy $F_\sigma(z_0)=0$.
Let $\omega_1=2z_0\sqrt{\omega_0}$ 
and $\xi=(\xi_0,\xi_1)\in\mb R^2$ be an eigenvector of the Hessian matrix $d''(\omega)$ corresponding to the zero eigenvalue.
Then there exists $\psi\in H^1(\mb R)$ with the following properties.
\begin{enumerate}
\item
$(B_j\phi_\omega,\psi)_{L^2}
=(T_j'(0)\phi_\omega,\psi)_{L^2}
=0$ for $j=0$, $1$,
and 
\[
S_\omega''(\phi_\omega)\psi
=-B_\xi\phi_\omega,\quad
\langle S_\omega'''(\phi_\omega)(\psi,\psi),\psi\rangle
\ne-3(B_\xi\psi,\psi)_{L^2}.
\]
\item
There exists $k_0>0$ such that 
$\langle S_\omega''(\phi_\omega)w,w\rangle
\ge k_0\|w\|_{H^1}^2$ for all $w\in H^1(\mb R)$ satisfying
\[
(w,B_\xi\phi_\omega)_{L^2}
=(w,T_0'(0)\phi_\omega)_{L^2}
=(w,T_1'(0)\phi_\omega)_{L^2}
=(w,\psi)_{L^2}
=0.
\]
\end{enumerate}
\end{proposition}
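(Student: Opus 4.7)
The plan is to construct $\psi$ as a perturbation of $\xi_0\pt_{\omega_0}\phi_\omega + \xi_1\pt_{\omega_1}\phi_\omega$ by elements of $\ker S_\omega''(\phi_\omega)$, to reduce the cubic nondegeneracy in (i) to the scalar inequality $\frac{d^3}{d\lambda^3}d(\omega+\lambda\xi)|_{\lambda=0}\ne0$, and to derive the coercivity in (ii) from the spectral structure of $L := S_\omega''(\phi_\omega)$ via a constrained-minimization argument.

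For the construction, differentiating $S_\omega'(\phi_\omega)=0$ in $\omega_k$ gives $S_\omega''(\phi_\omega)\pt_{\omega_k}\phi_\omega = -B_k\phi_\omega$, so that $Y := \xi_0\pt_{\omega_0}\phi_\omega + \xi_1\pt_{\omega_1}\phi_\omega$ satisfies $S_\omega''(\phi_\omega)Y = -B_\xi\phi_\omega$. The identities $(B_j\phi_\omega,\pt_{\omega_k}\phi_\omega)_{L^2}=\pt_{\omega_k}\pt_{\omega_j}d(\omega)$, $d''(\omega)\xi=0$, and $(B_j\phi_\omega,T_k'(0)\phi_\omega)_{L^2}=0$ (the last from the $T$-invariance of $Q_j$) then allow me to set $\psi := Y + a_0 T_0'(0)\phi_\omega + a_1 T_1'(0)\phi_\omega$, with $(a_0,a_1)$ uniquely determined by $(\psi,T_k'(0)\phi_\omega)_{L^2}=0$ (solvable since the $L^2$-Gram matrix of the $T_k'(0)\phi_\omega$ is positive definite), to get both required orthogonalities without disturbing $S_\omega''(\phi_\omega)\psi = -B_\xi\phi_\omega$.

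For the nondegeneracy, the $T$-invariance of $S_\mu$ makes $\phi_\lambda^a := T(\lambda a)\phi_{\omega+\lambda\xi}$ a critical point of $S_{\omega+\lambda\xi}$, so that $h(\lambda) := d(\omega+\lambda\xi) = S_{\omega+\lambda\xi}(\phi_\lambda^a)$ and $\dot\phi_0^a = \psi$. Differentiating $h'(\lambda) = \xi_0 Q_0(\phi_\lambda^a) + \xi_1 Q_1(\phi_\lambda^a)$ twice and using $S_{\omega+\lambda\xi}''(\phi_\lambda^a)\dot\phi_\lambda^a = -B_\xi\phi_\lambda^a$ together with its $\lambda$-derivative yields
\[
h'''(0) = \langle S_\omega'''(\phi_\omega)(\psi,\psi),\psi\rangle + 3(B_\xi\psi,\psi)_{L^2},
\]
so that the second assertion of (i) reduces to $h'''(0)\ne0$. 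The explicit form of $\phi_\omega$ makes $d(\omega) = \omega_0^{(\sigma+1)/2\sigma}G_\sigma(z)$ with $z = \omega_1/2\sqrt{\omega_0}$, and $\det d''(\omega)$ has the sign of $F_\sigma(z)$; expressing $h'''(0)$ in these $(\omega_0,z)$-coordinates at $z=z_0$ with $\xi$ the explicit $d''(\omega)$-null direction reduces it to a nonzero multiple of $F_\sigma'(z_0)$, and the strict monotonicity of $F_\sigma$ on $(-1,1)$ noted in \cite{LSS13} provides $F_\sigma'(z_0)>0$. This scalar calculation, which uses $3/2\le\sigma<2$ for both the $C^3$ regularity of $E$ and the monotonicity of $F_\sigma$, is the principal technical obstacle.

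For (ii), the linearized analysis in \cite{LSS13} shows that on $M_1 := \{w \in H^1 : (w,T_j'(0)\phi_\omega)_{L^2} = 0,\ j = 0,1\}$ the operator $L := S_\omega''(\phi_\omega)$ has exactly one negative eigenvalue $-\mu$ with eigenvector $\chi$, trivial kernel, and positive essential spectrum bounded below. One first checks $(\chi,\psi)_{L^2}\ne0$: otherwise $\psi$ lies in the $L$-positive subspace of $M_1$, making $\langle L\psi,\psi\rangle>0$ unless $\psi = 0$; but $\langle L\psi,\psi\rangle = -(B_\xi\phi_\omega,\psi)_{L^2} = 0$ by part (i), forcing $\psi = 0$ and then $B_\xi\phi_\omega = 0$, contradicting the linear independence of $\phi_\omega$ and $i\pt_x\phi_\omega$ in $H^1$. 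A Lagrange-multiplier argument on the infimum $k$ of $\langle Lw,w\rangle$ over $w\in M_1$ satisfying $(w,B_\xi\phi_\omega)_{L^2} = (w,\psi)_{L^2} = 0$ and $\|w\|_{L^2} = 1$ reduces the Euler--Lagrange equation to $Lw = kw + aB_\xi\phi_\omega$; using $\langle L^{-1}B_\xi\phi_\omega,B_\xi\phi_\omega\rangle_{M_1} = -(\psi,B_\xi\phi_\omega)_{L^2} = 0$ combined with the monotonicity of $k\mapsto\langle(L-k)^{-1}B_\xi\phi_\omega,B_\xi\phi_\omega\rangle_{M_1}$ on each component of the resolvent set rules out $k<0$, while $k = 0$ would give $w + a\psi\in\ker L\cap M_1 = \{0\}$, hence $w = 0$, contradicting normalization. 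Therefore $k>0$ and the desired $H^1$-coercivity with some $k_0>0$ follows.
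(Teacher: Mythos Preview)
Your construction of $\psi$ in (i) and the reduction of the cubic nondegeneracy to $h'''(0)=\frac{d^3}{d\lambda^3}d(\omega+\lambda\xi)|_{\lambda=0}\ne0$ coincide with the paper's argument (the paper works with $\hat\psi=\xi_0\partial_{\omega_0}\phi_\omega+\xi_1\partial_{\omega_1}\phi_\omega$ first and then adds the kernel correction, but this is the same as your path $T(\lambda a)\phi_{\omega+\lambda\xi}$). The substantive divergence is in how $h'''(0)\ne0$ is established.

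\medskip
\textbf{Gap in (i).} Your reduction of $h'''(0)$ to a nonzero multiple of $F_\sigma'(z_0)$ is in fact correct: since $d''(\omega)$ is symmetric with a simple zero eigenvalue, one has
\[
\partial_\xi\det d''(\omega)
=\mathrm{tr}\bigl(\mathrm{adj}\,d''(\omega)\cdot\partial_\xi d''(\omega)\bigr)
=\frac{\tau}{|\xi|^2}\,\xi^T(\partial_\xi d''(\omega))\xi
=\frac{\tau}{|\xi|^2}\,h'''(0),
\]
where $\tau$ is the nonzero eigenvalue of $d''(\omega)$; and $\det d''(\omega)=c(\omega)F_\sigma(z)$ with $c>0$ gives $\partial_\xi\det d''(\omega)=c(\omega)(\nabla_\omega z\cdot\xi)F_\sigma'(z_0)$, with $\nabla_\omega z\cdot\xi\ne0$ since $z_0\in(-1,1)$. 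However, the step ``strict monotonicity of $F_\sigma$ on $(-1,1)$ noted in \cite{LSS13}'' is not a rigorous input: as the present paper states explicitly, monotonicity of $F_\sigma$ in \cite{LSS13} is established only by \emph{numerical computation}. Invoking it leaves a genuine gap. The paper does \emph{not} go through $F_\sigma'$; instead (Lemma~\ref{d3} and Appendix~\ref{sec:cul}) it computes the third derivatives of $d$ directly from the integral formulas and obtains, in each of the two cases $\partial_{\omega_0}\partial_{\omega_1}d(\omega)=\mp\omega_0^{1/2}\partial_{\omega_0}^2d(\omega)$, an explicit nonzero factor of the form $(\sigma-1)(z_0\mp1)^2(z_0\pm1)$. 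This yields $h'''(0)\ne0$ for every root $z_0\in(-1,1)$ of $F_\sigma$ and for all $1<\sigma<2$, with no appeal to monotonicity of $F_\sigma$.

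\medskip
\textbf{Comparison in (ii).} Your Lagrange-multiplier/resolvent argument is valid in outline (once one notes $(B_\xi\phi_\omega,\chi)_{L^2}=\mu(\psi,\chi)_{L^2}\ne0$ to control the pole of $k\mapsto\langle(L-k)^{-1}B_\xi\phi_\omega,B_\xi\phi_\omega\rangle$ at $k=-\mu$, and handles attainment at $k=0$), but it is considerably heavier than the paper's proof. The paper simply sets, for $w$ with the four orthogonalities, $p=w+\alpha\psi$ with $\alpha=-\,(w,\chi_\omega)_{L^2}/(\psi,\chi_\omega)_{L^2}$; then $p\perp\chi_\omega,T_0'(0)\phi_\omega,T_1'(0)\phi_\omega$, so Lemma~\ref{sc} gives $\langle Lp,p\rangle\ge k_1\|p\|_{L^2}^2\ge k_1\|w\|_{L^2}^2$ (using $(w,\psi)_{L^2}=0$), while $\langle L\psi,\psi\rangle=0$ and $\langle L\psi,w\rangle=-(B_\xi\phi_\omega,w)_{L^2}=0$ yield $\langle Lp,p\rangle=\langle Lw,w\rangle$. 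This two-line substitution replaces your entire variational/resolvent analysis.
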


To prove Proposition \ref{ud} (i), 
we establish the following.

\begin{lemma} \label{d3}
Let $1<\sigma<2$ and 
$z_0=z_0(\sigma)\in(-1,1)$ satisfy $F_\sigma(z_0)=0$.
Let $\omega_1=2z_0\sqrt{\omega_0}$ 
and $\xi=(\xi_0,\xi_1)\in\mb R^2$ be an eigenvector of the Hessian matrix $d''(\omega)$ corresponding to the zero eigenvalue.
Then 
$\frac{d^3}{d\lambda^3}d(\omega+\lambda\xi)|_{\lambda=0}\ne0$.
\end{lemma}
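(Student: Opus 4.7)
The plan is to exploit the scaling symmetry of the family $\phi_\omega$ to write $d(\omega)=\omega_0^\kappa D(z)$ with $z=\omega_1/(2\sqrt{\omega_0})$ and $\kappa=(\sigma+1)/(2\sigma)\in(3/4,1)$, and then to compute $g'''(0):=\frac{d^3}{d\lambda^3}d(\omega+\lambda\xi)|_{\lambda=0}$ through Jacobi's formula for $\det d''$. First I would note that the explicit formulas for $\vp_\omega$ and the phase show $\phi_\omega(x)=\omega_0^{1/(4\sigma)}\Psi(\omega_0^{1/2}x;z)$ for a profile $\Psi$ depending only on $(x,z)$; substituting into $E$, $Q_0$, $Q_1$ yields $d(\omega)=\omega_0^\kappa D(z)$ with $D\in C^\infty(-1,1)$. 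Tracking the scaling through the entries of the Hessian one then obtains
\[
\det d''(\omega)=\omega_0^{2\kappa-3}\,h(z),
\]
where $h$ has the same sign as $F_\sigma$ by \cite[Lemma 4.2]{LSS13}; in particular $h(z_0)=0$, and the strict monotonicity of $F_\sigma$ near $z_0$ established numerically in \cite{LSS13} forces $h'(z_0)\ne0$.

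Next I would apply Jacobi's formula $\partial_\lambda\det A(\lambda)|_0=\mathrm{tr}(\mathrm{adj}(A(0))\dot A(0))$ to $A(\lambda)=d''(\omega+\lambda\xi)$. Since $\xi\ne0$ lies in the kernel of the symmetric matrix $d''(\omega)$ and $\det d''(\omega)=0$, $d''(\omega)$ has rank at most one; when the rank is one with the other eigenvalue equal to $\nu$ one has $\mathrm{adj}(d''(\omega))=(\nu/|\xi|^2)\xi\xi^T$. Combined with $(\dot A(0))_{ij}=\sum_k\partial_{\omega_k}(d''(\omega))_{ij}\,\xi_k$, the right-hand side of Jacobi's identity becomes $(\nu/|\xi|^2)\sum_{i,j,k}\partial_{\omega_i}\partial_{\omega_j}\partial_{\omega_k}d(\omega)\,\xi_i\xi_j\xi_k=(\nu/|\xi|^2)g'''(0)$. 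Meanwhile, differentiating the scaling factorization of $\det d''$ along $\omega+\lambda\xi$ and using $h(z_0)=0$ gives the left-hand side as $\omega_0^{2\kappa-3}h'(z_0)\,z'(0)$, with $z'(0)=\xi_1/(2\sqrt{\omega_0})-z_0\xi_0/(2\omega_0)$. Provided $h'(z_0)\,z'(0)\ne0$, the rank of $d''(\omega)$ must be exactly one (otherwise $\mathrm{adj}(d''(\omega))=0$ and the identity forces the left-hand side to vanish), so $\nu\ne0$ and
\[
g'''(0)=\frac{|\xi|^2\omega_0^{2\kappa-3}}{\nu}\,h'(z_0)\,z'(0)\ne0.
\]

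It remains to verify $z'(0)\ne0$. Differentiating the Euler identity $\omega_0 Q_0(\phi_\omega)+(\omega_1/2)Q_1(\phi_\omega)=\kappa d(\omega)$ (the infinitesimal form of the homogeneity $d(\lambda\omega_0,\sqrt{\lambda}\omega_1)=\lambda^\kappa d(\omega)$) yields $d''(\omega)v=((\kappa-1)Q_0(\phi_\omega),(\kappa-\tfrac12)Q_1(\phi_\omega))^T$ with $v=(\omega_0,\omega_1/2)^T$; since $Q_0(\phi_\omega)>0$ and $\kappa\ne1$ (as $\sigma>1$) this right-hand side is nonzero, so $v\notin\ker d''(\omega)$, i.e., $\xi$ is linearly independent of $v$, which is exactly the condition $z'(0)\ne0$. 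The hard part of the argument is the strict monotonicity of $F_\sigma$ near $z_0$ needed to ensure $h'(z_0)\ne0$; once that input from \cite{LSS13} is accepted, everything else is a bookkeeping exercise in the scaling symmetry and Jacobi's formula.
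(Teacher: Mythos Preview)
Your approach is structurally sound and genuinely different from the paper's, but it has a rigor gap that the paper's proof avoids. The paper proves Lemma~\ref{d3} by a direct, purely algebraic computation: using the explicit formulas for $\partial_{\omega_i}\partial_{\omega_j}d(\omega)$ and $\partial_{\omega_i}\partial_{\omega_j}\partial_{\omega_k}d(\omega)$ in terms of the integrals $\alpha_{0,\omega},\alpha_{1,\omega}$, together with the relation $\partial_{\omega_1}^2d=\omega_0\partial_{\omega_0}^2d$, it reduces (in each of the two cases $\partial_{\omega_0}\partial_{\omega_1}d=\pm\omega_0^{1/2}\partial_{\omega_0}^2d$) to showing that an explicit combination equals a nonzero multiple of $\sigma(\sigma-1)(z_0\mp 1)^2(z_0\pm 1)\alpha_{0,\omega}$, which is manifestly nonzero for $\sigma\in(1,2)$ and $z_0\in(-1,1)$. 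Nowhere does it use that $F_\sigma$ is monotone or that $F_\sigma'(z_0)\ne 0$.

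Your argument, by contrast, ultimately rests on $h'(z_0)\ne 0$, which you justify via the strict monotonicity of $F_\sigma$ near $z_0$. But as the paper itself notes, this monotonicity is established in \cite{LSS13} only by numerical computation, not analytically; the statement of Lemma~\ref{d3} merely assumes $F_\sigma(z_0)=0$ at some $z_0\in(-1,1)$ and asserts nothing about $F_\sigma'$. So while your scaling/Jacobi identity packaging is elegant (and your verifications of the homogeneity $d(\omega)=\omega_0^\kappa D(z)$, of $d''(\omega)v\ne 0$ via Euler's relation, and of the rank-one adjugate formula are all correct), the proof as written is conditional on an input the paper does not need. If you want your route to stand on its own, you would have to prove $F_\sigma'(z_0)\ne 0$ analytically; once you start computing that, you are essentially doing the same third-derivative calculation the paper carries out directly. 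In short: your Jacobi identity reformulation shows that Lemma~\ref{d3} is \emph{equivalent} to $h'(z_0)\ne 0$, but it does not by itself supply a rigorous proof of either statement.
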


The proof of Lemma \ref{d3} is given in Appendix \ref{sec:cul}.
To prove Proposition \ref{ud} (ii), 
we use the spectral condition of the linearized operator 
$S_\omega''(\phi_\omega)$ given by
\begin{align} \notag
S_\omega''(\phi_\omega)v
&=(-\pt_x^2
-i\sigma|\phi_\omega|^{2\sigma-2}\ol{\phi_\omega}\pt_x\phi_\omega
-i|\phi_\omega|^{2\sigma}\pt_x
+\omega_0
+\omega_1i\pt_x)v \\ \label{lo}
&\qquad-i\sigma|\phi_\omega|^{2\sigma-2}\phi_\omega\pt_x\phi_\omega \ol{v}.
\end{align}
The following result is due to \cite{LSS13}.

\begin{lemma}[{\cite[Theorem 3.1]{LSS13}}] \label{sc}
For $\sigma\ge1$ and $\omega\in\Omega$, there exist
$\chi_\omega\in H^1(\mb R)\setminus\{0\}$,
$\lambda_\omega<0$ and $k_1>0$ such that 
$S_\omega''(\phi_\omega)\chi_\omega=\lambda\chi_\omega$ and
$\langle S_\omega''(\phi_\omega)p,p\rangle
\ge k_1\|p\|_{L^2}^2$ for all $p\in H^1(\mb R)$ satisfying
\[
(p,\chi_\omega)_{L^2}
=(p,T_0'(0)\phi_\omega)_{L^2}
=(p,T_1'(0)\phi_\omega)_{L^2}
=0.
\]
\end{lemma}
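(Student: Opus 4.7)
The plan is to follow the gauge-transformation approach standard for DNLS-type linearized operators (as in Colin--Ohta and Liu--Simpson--Sulem): reduce the complex, non-self-adjoint-looking operator $S_\omega''(\phi_\omega)$ in \eqref{lo} to a block-diagonal real matrix Schr\"odinger operator via the explicit phase built into $\phi_\omega$, and then carry out a Sturm--Liouville spectral count followed by the standard spectral-decomposition coercivity argument.

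First I would perform the gauge transformation. Writing $\phi_\omega(x)=\vp_\omega(x)e^{i\theta_\omega(x)}$ with $\theta_\omega'(x)=\omega_1/2-\vp_\omega(x)^{2\sigma}/(2\sigma+2)$, and decomposing $v=e^{i\theta_\omega}(v_1+iv_2)$ with real $v_1,v_2$, a direct computation using \eqref{lo} shows that the first-order terms $i\omega_1\pt_x$ and $-i|\phi_\omega|^{2\sigma}\pt_x$ are absorbed exactly by the phase derivative $\theta_\omega'$, so $S_\omega''(\phi_\omega)$ is unitarily equivalent to a block-diagonal real operator
\[
\mathcal L=\begin{pmatrix}L_+ & 0\\ 0 & L_-\end{pmatrix},\qquad
L_\pm=-\pt_x^2+V_\pm(x),
\]
on $L^2(\mb R;\mb R^2)$, where the potentials $V_\pm$ are explicit polynomials in $\vp_\omega$ and $\vp_\omega^{2\sigma}$. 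The symmetries $T_0'(0)\phi_\omega=i\phi_\omega$ and $T_1'(0)\phi_\omega=-\pt_x\phi_\omega$ correspond, in the gauged coordinates, to $(0,\vp_\omega)$ and $(\vp_\omega',0)$ respectively (modulo constants from $\theta_\omega'$), so $L_-\vp_\omega=0$ and $L_+\vp_\omega'=0$.

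Next I would count eigenvalues. Since $\omega\in\Omega$ means $\omega_1^2<4\omega_0$, the potentials $V_\pm$ tend to $\omega_0-\omega_1^2/4>0$ at infinity, so $\sigma_{\mathrm{ess}}(\mathcal L)\subset[\omega_0-\omega_1^2/4,\infty)$. For $L_-$: $\vp_\omega$ is strictly positive on $\mb R$, so by Sturm oscillation theory $0$ is its ground-state eigenvalue, and $L_-\ge0$ with $\ker L_-=\mb R\vp_\omega$. For $L_+$: the explicit profile $\vp_\omega$ has the form of a single bump (the denominator $2\sqrt{\omega_0}\cosh(\sigma\sqrt{4\omega_0-\omega_1^2}\,x)-\omega_1$ attains a unique minimum in $x$), so $\vp_\omega'$ has exactly one zero. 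Hence $\vp_\omega'$ is the second eigenfunction of $L_+$, which forces a unique simple negative eigenvalue $\lambda_\omega<0$ with strictly positive ground state $\chi_-$. Setting $\chi_\omega:=e^{i\theta_\omega}\chi_-$ then gives the negative eigenvalue claim.

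Finally, for the coercivity, the spectral picture of $\mathcal L$ is: one negative simple eigenvalue $\lambda_\omega$ with eigenspace $\mb R\chi_-$; a two-dimensional kernel spanned by $(\vp_\omega',0)$ and $(0,\vp_\omega)$; the rest of the spectrum contained in $[\gamma,\infty)$ for some $\gamma>0$ (the spectral gap between $0$ and the next eigenvalue or the essential spectrum threshold). Given $p\in H^1(\mb R)$ satisfying the three orthogonality conditions in the statement, the gauged version $\tilde p$ satisfies $(\tilde p,\chi_-)_{L^2}=(\tilde p,\vp_\omega')_{L^2}=(\tilde p,\vp_\omega)_{L^2}=0$ (since the gauge transformation is an $L^2$-isometry preserving these inner products up to the explicit phase), so $\tilde p$ lies in the positive spectral subspace of $\mathcal L$. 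The spectral theorem then gives $\langle\mathcal L\tilde p,\tilde p\rangle\ge\gamma\|\tilde p\|_{L^2}^2$, which pulled back through the unitary gauge yields the desired $\langle S_\omega''(\phi_\omega)p,p\rangle\ge k_1\|p\|_{L^2}^2$ with $k_1=\gamma$.

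The main obstacle is the eigenvalue count for $L_+$: one must verify rigorously that the translation-symmetry kernel sits inside $L_+$ (not $L_-$) and that $\vp_\omega'$ is the second, not the ground-state, eigenfunction. The placement is settled by the gauge computation, while the oscillation count reduces to showing $\vp_\omega$ has a unique critical point, which is transparent from its explicit form. The nondegeneracy of $\lambda_\omega$ and the exact dimension of $\ker\mathcal L$ also need to be confirmed to rule out hidden eigenvalues, which follows from the standard Sturm--Liouville fact that one-dimensional Schr\"odinger operators have simple eigenvalues and that kernels of $L_\pm$ are at most one-dimensional on $H^1(\mb R)$.
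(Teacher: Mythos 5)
First, note that the paper itself does not prove this lemma: it is quoted directly from \cite[Theorem 3.1]{LSS13}, so there is no internal proof to compare against. Judged on its own terms, your argument has a genuine gap at its first and most load-bearing step: the claim that the gauge transformation $v=e^{i\theta_\omega}(v_1+iv_2)$ block-diagonalizes $S_\omega''(\phi_\omega)$ into $\mathrm{diag}(L_+,L_-)$ is false for the derivative nonlinearity. Carrying out the substitution in \eqref{lo} with $w=v_1+iv_2$, the coefficient of the first-order term $w'$ is $i(-2\theta_\omega'-\vp_\omega^{2\sigma}+\omega_1)=-i\tfrac{\sigma}{\sigma+1}\vp_\omega^{2\sigma}$, which does not vanish; since $-iw'=v_2'-iv_1'$, this leaves off-diagonal first-order couplings $\tfrac{\sigma}{\sigma+1}\vp_\omega^{2\sigma}v_2'$ and $-\tfrac{\sigma}{\sigma+1}\vp_\omega^{2\sigma}v_1'$, and the zeroth-order terms $-i\sigma|\phi_\omega|^{2\sigma-2}\ol{\phi_\omega}\pt_x\phi_\omega\,v$ and $-i\sigma|\phi_\omega|^{2\sigma-2}\phi_\omega\pt_x\phi_\omega\,\ol v$ also mix $v_1$ and $v_2$ because $\pt_x\phi_\omega=e^{i\theta_\omega}(\vp_\omega'+i\theta_\omega'\vp_\omega)$ is not real in the gauged frame. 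A structural check confirms the failure: the translation kernel element $\pt_x\phi_\omega$ corresponds to $(\vp_\omega',\theta_\omega'\vp_\omega)$, not to $(\vp_\omega',0)$ as you assert. If the operator were $\mathrm{diag}(L_+,L_-)$ with $L_-\vp_\omega=0$ (coming from $i\phi_\omega\mapsto(0,\vp_\omega)$), then we would also need $L_-(\theta_\omega'\vp_\omega)=0$, and simplicity of eigenvalues of scalar one-dimensional Schr\"odinger operators would force $\theta_\omega'$ to be constant, which it is not.

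Everything downstream --- the Sturm--Liouville count identifying $\vp_\omega'$ as the second eigenfunction of $L_+$, the identification of the kernel, and the spectral-gap coercivity --- rests on this false decomposition, so the argument does not go through as written. The genuine content of \cite[Theorem 3.1]{LSS13} is precisely the treatment of the coupled $2\times2$ matrix operator with first-order off-diagonal entries; the NLS-style $L_\pm$ splitting works only when the profile is real in the chosen frame, and for general $\sigma$ one cannot remove the derivative term by a Hayashi--Ozawa-type gauge as one can for the cubic DNLS. To make your outline into a proof you would have to either control the off-diagonal terms explicitly (e.g.\ by completing the square in the quadratic form after the gauge change) or follow a variational/index-counting route; asserting block-diagonality is not available.
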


\begin{remark}
By Lemma \ref{sc}, 
it is impossible that
for $\sigma\ge1$ and $\omega\in\Omega$,  
the Hessian matrix $d''(\omega)$ has two nonnegative eigenvalues (see \cite[Section 3]{GSS2}).
\end{remark}

Now, we verify Proposition \ref{ud}.
We define
\[
Q_\xi(u)
=\frac12(B_\xi u,u)_{L^2},\quad
u\in H^1(\mb R).
\]

\begin{proof}[Proof of Proposition \ref{ud}]
(i)
By differentiating $S_{\omega+\lambda\xi}'(\phi_{\omega+\lambda\xi})=0$ with respect to $\lambda$,
we have
\begin{align*}
&S_{\omega+\lambda\xi}''(\phi_{\omega+\lambda\xi})\pt_\lambda\phi_{\omega+\lambda\xi}
=-B_\xi\phi_{\omega+\lambda\xi}, \\
&S_{\omega+\lambda\xi}'''(\phi_{\omega+\lambda\xi})(\pt_\lambda\phi_{\omega+\lambda\xi},\pt_\lambda\phi_{\omega+\lambda\xi})
+S_{\omega+\lambda\xi}''(\phi_{\omega+\lambda\xi})\pt_\lambda^2\phi_{\omega+\lambda\xi}
=-2B_\xi\pt_\lambda\phi_{\omega+\lambda\xi}.
\end{align*}
By differentiating 
$d(\omega+\lambda\xi)
=S_{\omega+\lambda\xi}(\phi_{\omega+\lambda\xi})$
with respect to $\lambda$, we obtain
\begin{align*}
\frac{d}{d\lambda}d(\omega+\lambda\xi)
&=Q_\xi(\phi_{\omega+\lambda\xi}), \\
\frac{d^2}{d\lambda^2}d(\omega+\lambda\xi)
&=\langle Q_\xi'(\phi_{\omega+\lambda\xi}),\pt_\lambda\phi_{\omega+\lambda\xi}\rangle
=(B_\xi\phi_{\omega+\lambda\xi},\pt_\lambda\phi_{\omega+\lambda\xi})_{L^2}, \\
\frac{d^3}{d\lambda^3}d(\omega+\lambda\xi)
&=(B_\xi\pt_\lambda\phi_{\omega+\lambda\xi},\pt_\lambda\phi_{\omega+\lambda\xi})_{L^2}
+(B_\xi\phi_{\omega+\lambda\xi},\pt_\lambda^2\phi_{\omega+\lambda\xi})_{L^2} \\
&=(B_\xi\pt_\lambda\phi_{\omega+\lambda\xi},\pt_\lambda\phi_{\omega+\lambda\xi})_{L^2}
-\langle S_{\omega+\lambda\xi}''(\phi_{\omega+\lambda\xi})\pt_\lambda\phi_{\omega+\lambda\xi},\pt_\lambda^2\phi_{\omega+\lambda\xi}\rangle \\
&=(B_\xi\pt_\lambda\phi_{\omega+\lambda\xi},\pt_\lambda\phi_{\omega+\lambda\xi})_{L^2}
-\langle S_{\omega+\lambda\xi}''(\phi_{\omega+\lambda\xi})\pt_\lambda^2\phi_{\omega+\lambda\xi},\pt_\lambda\phi_{\omega+\lambda\xi}\rangle \\
&=\langle S_{\omega+\lambda\xi}'''(\phi_{\omega+\lambda\xi})(\pt_\lambda\phi_{\omega+\lambda\xi},\pt_\lambda\phi_{\omega+\lambda\xi}),\pt_\lambda\phi_{\omega+\lambda\xi}\rangle \\
&\quad+3(B_\xi\pt_\lambda\phi_{\omega+\lambda\xi},\pt_\lambda\phi_{\omega+\lambda\xi})_{L^2}.
\end{align*}
We take 
\[
\hat\psi
=\pt_\lambda\phi_{\omega+\lambda\xi}|_{\lambda=0}
=\xi_0\pt_{\omega_0}\phi_\omega+\xi_1\pt_{\omega_1}\phi_\omega.
\]
Then, since $\xi$ is a zero eigenvector of $d''(\omega)$,
we deduce
\[
0=d''(\omega)\xi
=\begin{bmatrix}
\langle Q_0'(\phi_\omega),\xi_0\pt_{\omega_0}\phi_\omega+\xi_1\pt_{\omega_1}\phi_\omega\rangle \\
\langle Q_1'(\phi_\omega),\xi_0\pt_{\omega_0}\phi_\omega+\xi_1\pt_{\omega_1}\phi_\omega\rangle
\end{bmatrix}
=\begin{bmatrix}
(B_0\phi_\omega,\hat\psi)_{L^2} \\
(B_1\phi_\omega,\hat\psi)_{L^2}
\end{bmatrix}.
\]
Moreover, it follows from
$\frac{d^3}{d\lambda^3}d(\omega+\lambda\xi)|_{\lambda=0}\ne0$
that
\[
0\ne\frac{d^3}{d\lambda^3}d(\omega+\lambda\xi)|_{\lambda=0}
=\langle S_\omega'''(\phi_\omega)(\hat\psi,\hat\psi),\hat\psi\rangle
+3(B_\xi\hat\psi,\hat\psi)_{L^2}.
\]
Let
\[
\psi
=\hat\psi+\sum_{j=0}^1\mu_jT_j'(0)\phi_\omega,
\]
where $(\mu_0,\mu_1)\in\mb{R}^2$ is taken so that
\[
(T_0'(0)\phi_\omega,\psi)_{L^2}
=(T_1'(0)\phi_\omega,\psi)_{L^2}=0.
\]
Then we see that
\begin{align*}
&(B_j\phi_\omega,\psi)_{L^2}
=(B_j\phi_\omega,\hat\psi)_{L^2}
=0,\quad j=0,1, \\
&S_\omega''(\phi_\omega)\psi
=S_\omega''(\phi_\omega)\hat\psi
=-B_\xi\phi_\omega, \\
&\langle S_\omega'''(\phi_\omega)(\psi,\psi),\psi\rangle
+3(B_\xi\psi,\psi)_{L^2}
=\langle S_\omega'''(\phi_\omega)(\hat\psi,\hat\psi),\hat\psi\rangle
+3(B_\xi\hat\psi,\hat\psi)_{L^2}
\ne0.
\end{align*}
Thus, we have the conclusion.

(ii)
First, we show that
$\langle S_\omega''(\phi_\omega)w,w\rangle\ge k_1\|w\|_{L^2}^2$,
where $k_1$ is the positive constant given in Lemma \ref{sc}.
Since $\psi\ne0$ and $(\psi,T_j'(0)\phi_\omega)_{L^2}
=\langle S_\omega''(\phi_\omega)\psi,\psi\rangle=0$,
it follows from Lemma \ref{sc} that
$(\psi,\chi_\omega)_{L^2}\ne0$.
Let 
\[
\alpha=-\frac{(w,\chi_\omega)_{L^2}}{(\psi,\chi_\omega)_{L^2}},\quad
p=w+\alpha\psi.
\]
Then we have 
$(p,\chi_\omega)_{L^2}
=(p,T_0'(0)\phi_\omega)_{L^2}
=(p,T_1'(0)\phi_\omega)_{L^2}
=0$.
By Lemma \ref{sc} and $(w,\psi)_{L^2}=0$, we obtain
\[
\langle S_\omega''(\phi_\omega)p,p\rangle
\ge k_1\|w+\alpha\psi\|_{L^2}^2
\ge k_1\|w\|_{L^2}^2.
\]
On the other hand, by
$S_\omega''(\phi_\omega)\psi
=-B_\xi\phi_\omega$ and
$\langle S_\omega''(\phi_\omega)\psi,\psi\rangle
=(w,B_\xi\phi_\omega)_{L^2}=0$, we have
$\langle S_\omega''(\phi_\omega)p,p\rangle
=\langle S_\omega''(\phi_\omega)w,w\rangle$.

Next, we prove the conclusion.
Since $\phi_\omega$, $\pt_x\phi_\omega\in L^\infty(\mb R)$,
by \eqref{lo}, we see that there exist positive constants $c$ and $C$ such that
\begin{align*}
c\|v\|_{H^1}^2
\le\langle S_\omega''(\phi_\omega)v,v\rangle
+C\|v\|_{L^2}^2
\end{align*}
for all $v\in H^1(\mb R)$. 
This and first claim imply the conclusion.
\end{proof}

\section{Proof of Theorem \ref{mt}} \label{sec:prf}

In this section, we prove Theorem \ref{mt} by using Proposition \ref{ud}.
Throughout this section, 
let $3/2\le\sigma<2$,
$z_0=z_0(\sigma)\in(-1,1)$ satisfy $F_\sigma(z_0)=0$, 
$\omega_1=2z_0\sqrt{\omega_0}$ and
$\xi=(\xi_0,\xi_1)\in\mb R^2$ be an eigenvector of the Hessian matrix $d''(\omega)$ corresponding to the zero eigenvalue.

\begin{lemma} \label{s1}
There exist $\lambda_0>0$ and a $C^\infty$-mapping 
$\rho\colon(-\lambda_0,\lambda_0)\to\mb{R}$ such that
\begin{equation} \label{Qxi}
Q_\xi(\phi_\omega+\lambda\psi+\rho(\lambda)B_\xi\phi_\omega)
=Q_\xi(\phi_\omega)
\end{equation}
for all $\lambda\in(-\lambda_0,\lambda_0)$, and
\begin{equation} \label{r1}
\rho(\lambda)
=-\frac{(B_\xi\psi,\psi)_{L^2}}{2\|B_\xi\phi_\omega\|_{L^2}^2}\lambda^2
+o(\lambda^2)
\end{equation}
as $\lambda\to 0$.
\end{lemma}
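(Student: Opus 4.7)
The plan is to apply the implicit function theorem to the scalar-valued function
\[
F(\lambda,\rho):=Q_\xi\bigl(\phi_\omega+\lambda\psi+\rho B_\xi\phi_\omega\bigr)-Q_\xi(\phi_\omega).
\]
Because $Q_\xi(u)=\tfrac12(B_\xi u,u)_{L^2}$ is a continuous quadratic form on $H^1(\mb R)$, the function $F$ is a polynomial of degree $2$ in $(\lambda,\rho)$, hence $C^\infty$, and writing out the expansion one finds
\[
F(\lambda,\rho)=\lambda(B_\xi\phi_\omega,\psi)_{L^2}+\rho\|B_\xi\phi_\omega\|_{L^2}^2+\tfrac12\lambda^2(B_\xi\psi,\psi)_{L^2}+\lambda\rho(B_\xi\psi,B_\xi\phi_\omega)_{L^2}+\tfrac12\rho^2(B_\xi^2\phi_\omega,B_\xi\phi_\omega)_{L^2}.
\]

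Before invoking the implicit function theorem I would check the only nontrivial hypothesis, namely $B_\xi\phi_\omega\not\equiv0$, so that $\pt_\rho F(0,0)=\|B_\xi\phi_\omega\|_{L^2}^2>0$. Since $B_\xi=\xi_0+i\xi_1\pt_x$ with $\xi\ne0$, the equation $B_\xi\phi_\omega=0$ would force $\phi_\omega$ to be either identically zero or proportional to a pure exponential $e^{i(\xi_0/\xi_1)x}$; neither is compatible with the explicit decaying profile $\phi_\omega\in H^1(\mb R)$, so $B_\xi\phi_\omega\ne0$. With this in hand, $F(0,0)=0$ and the implicit function theorem produces $\lambda_0>0$ and a unique $C^\infty$ function $\rho\colon(-\lambda_0,\lambda_0)\to\mb R$ with $\rho(0)=0$ and $F(\lambda,\rho(\lambda))=0$, which is precisely \eqref{Qxi}.

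To extract the asymptotic \eqref{r1}, I would use the orthogonality $(B_\xi\phi_\omega,\psi)_{L^2}=\sum_{j}\xi_j(B_j\phi_\omega,\psi)_{L^2}=0$ furnished by Proposition \ref{ud}(i), which kills the linear-in-$\lambda$ term of $F$. Solving $F(\lambda,\rho(\lambda))=0$ for $\rho(\lambda)$ then gives
\[
\|B_\xi\phi_\omega\|_{L^2}^2\,\rho(\lambda)=-\tfrac12\lambda^2(B_\xi\psi,\psi)_{L^2}-\lambda\rho(\lambda)(B_\xi\psi,B_\xi\phi_\omega)_{L^2}-\tfrac12\rho(\lambda)^2(B_\xi^2\phi_\omega,B_\xi\phi_\omega)_{L^2},
\]
which shows $\rho(\lambda)=O(\lambda^2)$, and a single bootstrap then yields the stated leading coefficient $-(B_\xi\psi,\psi)_{L^2}/(2\|B_\xi\phi_\omega\|_{L^2}^2)$.

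I do not anticipate any real obstacle: the quadratic nature of $Q_\xi$ makes the Taylor expansion exact, so the implicit function theorem and the orthogonality from Proposition \ref{ud}(i) do all the work. The only point requiring a separate (but short) verification is the non-degeneracy $B_\xi\phi_\omega\ne0$, which is needed to invoke the implicit function theorem and to divide by $\|B_\xi\phi_\omega\|_{L^2}^2$ in the final expression for $\rho$.
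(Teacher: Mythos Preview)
Your proposal is correct and follows essentially the same route as the paper: define $F(\lambda,\rho)=Q_\xi(\phi_\omega+\lambda\psi+\rho B_\xi\phi_\omega)-Q_\xi(\phi_\omega)$, check $F(0,0)=0$ and $\pt_\rho F(0,0)=\|B_\xi\phi_\omega\|_{L^2}^2\ne0$, apply the implicit function theorem, and then read off the Taylor coefficients of $\rho$. The only cosmetic difference is that you expand the quadratic $F$ explicitly and bootstrap, whereas the paper differentiates the identity $F(\lambda,\rho(\lambda))=0$ to obtain $\rho'(0)=0$ and $\rho''(0)=-(B_\xi\psi,\psi)_{L^2}/\|B_\xi\phi_\omega\|_{L^2}^2$; your added justification that $B_\xi\phi_\omega\ne0$ is a welcome detail the paper leaves implicit.
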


\begin{proof}
We define
\[
F(\lambda,\rho)
=Q_\xi(\phi_\omega+\lambda\psi+\rho B_\xi\phi_\omega)
-Q_\xi(\phi_\omega),\quad
(\lambda,\rho)\in\mb{R}^2.
\]
Then we have
$F(0,0)=0$ and
\[\pt_\rho F(0,0)
=\langle Q_\xi'(\phi_\omega),B_\xi\phi_\omega\rangle
=\|B_\xi\phi_\omega\|_H^2
\ne0.
\]
By the implicit function theorem, 
there exist $\lambda_0>0$ and a $C^\infty$-mapping
$\rho\colon(-\lambda_0,\lambda_0)\to\mb{R}$ such that
$F(\lambda,\rho(\lambda))=0$ for all $\lambda\in(-\lambda_0,\lambda_0)$.

Moreover, by differentiating $F(\lambda,\rho(\lambda))=0$ with respect to $\lambda$, we obtain
\[
\rho'(0)
=0,\quad
\rho''(0)
=-\frac{(B_\xi\psi,\psi)_{L^2}}{\|B_\xi\phi_\omega\|_{L^2}^2}.
\]
This completes the proof.
\end{proof}

We define
\[
\Psi(\lambda)
=\phi_\omega
+\lambda\psi
+\rho(\lambda)B_\xi\phi_\omega,\quad
\lambda\in(-\lambda_0,\lambda_0).
\]

\begin{lemma} \label{s2}
There exist $\ve_0>0$ and
$C^3$-mappings 
$\alpha=(\alpha_0,\alpha_1)\colon U_{\ve_0}(\phi_\omega)\to\mb{R}^2$, 
$\Lambda\colon U_{\ve_0}(\phi_\omega)\to(-\lambda_0,\lambda_0)$,
$\beta\colon U_{\ve_0}(\phi_\omega)\to\mb{R}$,
$w\colon U_{\ve_0}(\phi_\omega)\to H^1(\mb R)$ such that
\begin{gather} \label{Tau}
T(\alpha(u))u
=\Psi(\Lambda(u))
+\beta(u)B_\xi\phi_\omega
+w(u), \\ \notag
(w(u),B_\xi\phi_\omega)_{L^2}
=(w(u),T_0'(0)\phi_\omega)_{L^2}
=(w(u),T_1'(0)\phi_\omega)_{L^2}
=(w(u),\psi)_{L^2}
=0, \\ \notag
\alpha(T(s)u)
=\alpha(u)-s,\quad 
\Lambda(T(s)u)
=\Lambda(u), \\ \notag
\beta(T(s)u)
=\beta(u),\quad
w(T(s)u)
=w(u)
\end{gather}
for all $u\in U_{\ve_0}(\phi_\omega)$ and $s\in\mb{R}^2$.
\end{lemma}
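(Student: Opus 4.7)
The plan is to prove Lemma \ref{s2} by a single application of the implicit function theorem encoding simultaneously the two-parameter group modulation and the projection onto the two distinguished directions $\psi$ and $B_\xi\phi_\omega$. Setting $R(\alpha,\Lambda,\beta;u):=T(\alpha)u-\Psi(\Lambda)-\beta B_\xi\phi_\omega$, I would define
\[
G\colon\mb R^2\times(-\lambda_0,\lambda_0)\times\mb R\times H^1(\mb R)\to\mb R^4,
\]
\[
G=\bigl((R,B_\xi\phi_\omega)_{L^2},\,(R,T_0'(0)\phi_\omega)_{L^2},\,(R,T_1'(0)\phi_\omega)_{L^2},\,(R,\psi)_{L^2}\bigr).
\]
Since $T$ acts by $H^1$-isometries, $\Psi$ is $C^\infty$ by Lemma \ref{s1}, and the remaining data $\phi_\omega$, $B_\xi\phi_\omega$, $\psi$ are fixed, $G$ is $C^\infty$ in $(\alpha,\Lambda,\beta)$ and affine in $u$, and $G(0,0,0;\phi_\omega)=0$.

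The decisive step is to verify that the partial differential $\pt_{(\alpha,\Lambda,\beta)}G$ at the base point is an isomorphism of $\mb R^4$. Because $\rho'(0)=0$ by Lemma \ref{s1}, one has $\Psi'(0)=\psi$, so the base-point derivatives of $R$ are
\[
\pt_{\alpha_j}R=T_j'(0)\phi_\omega\ (j=0,1),\quad
\pt_\Lambda R=-\psi,\quad
\pt_\beta R=-B_\xi\phi_\omega.
\]
Consequently the $4\times4$ Jacobian coincides, up to a column permutation and sign flip, with the $L^2$-Gram matrix of $\{T_0'(0)\phi_\omega,T_1'(0)\phi_\omega,\psi,B_\xi\phi_\omega\}$. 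By Proposition \ref{ud}(i), $\psi$ is $L^2$-orthogonal to each of the other three vectors, so the Gram matrix is block-diagonal with a nonzero $1\times1$ block $\|\psi\|_{L^2}^2$ and a $3\times3$ Gram block for $\{i\phi_\omega,-\pt_x\phi_\omega,\xi_0\phi_\omega+i\xi_1\pt_x\phi_\omega\}$; linear independence of this triple over $\mb R$ is immediate from $\xi\ne0$ together with the fact that $\phi_\omega$ is nowhere vanishing.

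The implicit function theorem then yields $\delta>0$ and $C^\infty$ maps $\alpha,\Lambda,\beta$ defined on the $H^1$-ball of radius $\delta$ about $\phi_\omega$ with $G(\alpha(u),\Lambda(u),\beta(u);u)=0$, and setting $w(u):=T(\alpha(u))u-\Psi(\Lambda(u))-\beta(u)B_\xi\phi_\omega$ produces the decomposition \eqref{Tau} together with the four $L^2$-orthogonalities. To extend the construction to the full tubular neighborhood $U_{\ve_0}(\phi_\omega)$, which is $T$-invariant because $T$ acts by $H^1$-isometries, I would exploit the identity $R(\alpha-s,\Lambda,\beta;T(s)u)=R(\alpha,\Lambda,\beta;u)$: for $u\in U_{\ve_0}$ choose $s\in\mb R^2$ with $T(-s)u$ in the $H^1$-ball of radius $\delta$ and set $\alpha(u):=\alpha(T(-s)u)-s$, keeping $\Lambda,\beta,w$ unchanged. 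Well-definedness and the equivariance identities $\alpha(T(r)u)=\alpha(u)-r$, $\Lambda(T(r)u)=\Lambda(u)$, $\beta(T(r)u)=\beta(u)$, $w(T(r)u)=w(u)$ then follow from the local uniqueness clause of the implicit function theorem.

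The step I expect to be the main obstacle is the invertibility of the $4\times4$ Jacobian, equivalent to the linear independence of $\{T_0'(0)\phi_\omega,T_1'(0)\phi_\omega,B_\xi\phi_\omega,\psi\}$ in $L^2(\mb R)$. The orthogonality properties of $\psi$ provided by Proposition \ref{ud} isolate one of the four vectors from the others, while the triple independence rests on the explicit structure of $\phi_\omega$ together with $\xi\ne0$. Everything else is routine implicit function theorem and equivariance bookkeeping.
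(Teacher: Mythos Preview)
Your approach is essentially the paper's: set up the four $L^2$-pairings of $T(\alpha)u-\Psi(\Lambda)-\beta B_\xi\phi_\omega$ against the four test vectors as a map $G$, verify invertibility of $\pt_{(\alpha,\Lambda,\beta)}G$ at the base point, and apply the implicit function theorem, defining $w(u)$ as the remainder. The only difference is cosmetic---the paper exploits the additional orthogonalities $(B_\xi\phi_\omega,T_j'(0)\phi_\omega)_{L^2}=0$ (immediate from $B_j=-iT_j'(0)$ and skewness of $i$) to obtain a $2+1+1$ block Jacobian rather than your $3+1$, so it only needs linear independence of $T_0'(0)\phi_\omega,\,T_1'(0)\phi_\omega$; your argument for the $3\times3$ block is also correct, though the relevant fact is the $\mb C$-linear independence of $\phi_\omega$ and $\pt_x\phi_\omega$ (a decay statement) rather than the nowhere-vanishing of $\phi_\omega$.
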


\begin{proof}
We define 
\[
G(u,\alpha,\Lambda,\beta)
=\begin{bmatrix}
(T(\alpha)u-\Psi(\Lambda)-\beta B_\xi\phi_\omega,T_0'(0)\phi_\omega)_{L^2} \\
(T(\alpha)u-\Psi(\Lambda)-\beta B_\xi\phi_\omega,T_1'(0)\phi_\omega)_{L^2} \\
(T(\alpha)u-\Psi(\Lambda)-\beta B_\xi\phi_\omega,\psi)_{L^2} \\
(T(\alpha)u-\Psi(\Lambda)-\beta B_\xi\phi_\omega,B_\xi\phi_\omega)_{L^2}
\end{bmatrix},
\]
for 
$(u,\alpha,\Lambda,\beta)\in H^1(\mb R)\times\mb{R}^2\times\mb{R}\times\mb{R}$.
Then, we have
$G(\phi_\omega,0,0,0)=0$ and 
\begin{align*}
&\frac{\pt G}{\pt(\alpha,\Lambda,\beta)}(\phi_\omega,0,0,0) \\
&=\begin{bmatrix}
\|T_0'(0)\phi_\omega\|_{L^2}^2 & (T_1'(0)\phi_\omega,T_0'(0)\phi_\omega)_{L^2} & 0 &0 \\
(T_0'(0)\phi_\omega,T_1'(0)\phi_\omega)_{L^2} & \|T_1'(0)\phi_\omega\|_{L^2}^2 &0 & 0 \\
0 & 0 & -\|\psi\|_{L^2}^2 & 0 \\
0 & 0 & 0 & -\|B_\xi\phi_\omega\|_{L^2}^2
\end{bmatrix}.
\end{align*}
Since $T_0'(0)\phi_\omega$, $T_1'(0)\phi_\omega$ are linearly independent, we see that 
$\frac{\pt G}{\pt(\alpha,\Lambda,\beta)}(\phi_\omega,0,0,0)$
is invertible.
Thus by the implicit function theorem, there exist $\ve_0>0$, 
$\alpha=(\alpha_0,\alpha_1)\colon U_{\ve_0}(\phi_\omega)\to\mb{R}^2$,
$\Lambda\colon U_{\ve_0}(\phi_\omega)\to(-\lambda_0,\lambda_0)$ and
$\beta\colon U_{\ve_0}(\phi_\omega)\to\mb{R}$ such that 
$G(u,\alpha(u),\Lambda(u),\beta(u))=0$ for all $u\in U_{\ve_0}(\phi_\omega)$.
Finally, we define
\[
w(u)
=T(\alpha(u))u
-\Psi(\Lambda(u))
-\beta(u)B_\xi\phi_\omega,\quad
u\in U_{\ve_0}(\phi_\omega).
\]
Then, we have the conclusion. 
\end{proof}

\begin{remark}
By the uniqueness of the solution of $G=0$, we have
\[
\alpha(\Psi(\lambda))=0,\
\Lambda(\Psi(\lambda))=\lambda,\
\beta(\Psi(\lambda))=0,\
w(\Psi(\lambda))=0,\quad
\lambda\in(-\lambda_0,\lambda_0).
\]
\end{remark}

\begin{lemma} \label{s3}
$\alpha_j'(u)$, $\Lambda'(u)$, $\alpha_j''(u)v\in H^1(\mb R)$ for all $u\in U_{\ve_0}(\phi_\omega)$ and $v\in H^1(\mb R)$. 
Moreover
\begin{equation} \label{doa}
\begin{aligned}
\alpha_0'(\phi_\omega)
&=\frac{-\|T_1'(0)\phi_\omega\|_{L^2}^2T_0'(0)\phi_\omega+(T_0'(0)\phi_\omega,T_1'(0)\phi_\omega)_{L^2}T_1'(0)\phi_\omega}
{\|T_0'(0)\phi_\omega\|_{L^2}^2\|T_1'(0)\phi_\omega\|_{L^2}^2-(T_0'(0)\phi_\omega,T_1'(0)\phi_\omega)_{L^2}^2}, \\
\alpha_1'(\phi_\omega)
&=\frac{(T_0'(0)\phi_\omega,T_1'(0)\phi_\omega)_{L^2}T_0'(0)\phi_\omega-\|T_0'(0)\phi_\omega\|_{L^2}^2T_1'(0)\phi_\omega}
{\|T_0'(0)\phi_\omega\|_{L^2}^2\|T_1'(0)\phi_\omega\|_{L^2}^2-(T_0'(0)\phi_\omega,T_1'(0)\phi_\omega)_{L^2}^2}.
\end{aligned}
\end{equation}
\end{lemma}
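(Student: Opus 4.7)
The approach is to differentiate the implicit identity $G(u,\alpha(u),\Lambda(u),\beta(u))=0$ from Lemma~\ref{s2} in $u$ and then read off the $L^2$-Riesz representatives of the resulting Fréchet derivatives. Set $f_1=T_0'(0)\phi_\omega$, $f_2=T_1'(0)\phi_\omega$, $f_3=\psi$, $f_4=B_\xi\phi_\omega$, so that $G_k(u,\alpha,\Lambda,\beta)=(T(\alpha)u-\Psi(\Lambda)-\beta B_\xi\phi_\omega,f_k)_{L^2}$. Using that $u\mapsto T(\alpha)u$ is linear, the chain rule gives, for $v\in H^1(\mb R)$,
\[
M(u)\begin{bmatrix}\alpha_0'(u)v\\ \alpha_1'(u)v\\ \Lambda'(u)v\\ \beta'(u)v\end{bmatrix}
=-\begin{bmatrix}(T(\alpha(u))v,f_1)_{L^2}\\ (T(\alpha(u))v,f_2)_{L^2}\\ (T(\alpha(u))v,f_3)_{L^2}\\ (T(\alpha(u))v,f_4)_{L^2}\end{bmatrix},
\]
where $M(u)=\frac{\pt G}{\pt(\alpha,\Lambda,\beta)}(u,\alpha(u),\Lambda(u),\beta(u))$. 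At $u=\phi_\omega$ the matrix $M(\phi_\omega)$ is exactly the invertible block-diagonal matrix displayed in the proof of Lemma~\ref{s2}, so after possibly shrinking $\ve_0$ I may assume $M(u)$ is invertible throughout $U_{\ve_0}(\phi_\omega)$.

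Next, $T(s)$ is unitary on $L^2(\mb R)$ with $T(s)^*=T(-s)$ and preserves every Sobolev space $H^k(\mb R)$, so
\[
(T(\alpha(u))v,f_k)_{L^2}=(v,T(-\alpha(u))f_k)_{L^2},\qquad T(-\alpha(u))f_k\in H^1(\mb R).
\]
Inverting $M(u)$ therefore yields $\alpha_j'(u)v=(v,g_j(u))_{L^2}$ and $\Lambda'(u)v=(v,h(u))_{L^2}$, where $g_j(u),h(u)$ are explicit linear combinations of $\{T(-\alpha(u))f_k\}_{k=1}^4$ with scalar coefficients coming from $-M(u)^{-1}$. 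In particular $g_j(u),h(u)\in H^1(\mb R)$; identifying the Fréchet derivatives with these representatives gives $\alpha_j'(u),\Lambda'(u)\in H^1(\mb R)$.

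For \eqref{doa}, specialise to $u=\phi_\omega$: then $\alpha(\phi_\omega)=0$ so $T(-\alpha(\phi_\omega))=I$, and $M(\phi_\omega)$ is block diagonal, so the two $\alpha_j'(\phi_\omega)$ equations decouple from those for $\Lambda'(\phi_\omega)$ and $\beta'(\phi_\omega)$. The remaining $2\times 2$ Gram system
\[
\begin{bmatrix}\|f_1\|_{L^2}^2 & (f_1,f_2)_{L^2}\\ (f_1,f_2)_{L^2} & \|f_2\|_{L^2}^2\end{bmatrix}\begin{bmatrix}\alpha_0'(\phi_\omega)v\\ \alpha_1'(\phi_\omega)v\end{bmatrix}=-\begin{bmatrix}(v,f_1)_{L^2}\\ (v,f_2)_{L^2}\end{bmatrix}
\]
is inverted by Cramer's rule to give exactly \eqref{doa}; linear independence of $T_0'(0)\phi_\omega$ and $T_1'(0)\phi_\omega$ guarantees the determinant is nonzero.

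Finally, for $\alpha_j''(u)v$, differentiate the identity $\alpha_j'(u)=g_j(u)$ once more in $u$. Because $g_j$ depends on $u$ only through the $C^3$ scalars $[M(u)^{-1}]_{j+1,k}$ and through the $H^1$-valued map $u\mapsto T(-\alpha(u))f_k$, and because each $f_k$ is smooth and exponentially decaying (hence in $H^2(\mb R)$) so that $\pt_\alpha T(-\alpha)f_k\in H^1(\mb R)$, differentiation produces $\alpha_j''(u)v$ as a finite sum of terms of the form (scalar linear in $v$)$\times$($H^1$-valued function), giving $\alpha_j''(u)v\in H^1(\mb R)$. The only delicate point throughout is keeping the Riesz representatives inside $H^1$ rather than merely $L^2$; this succeeds because $G$ was built from $L^2$-pairings against $H^1$-vectors and because $T(s)$ together with its generators preserves $H^1(\mb R)$.
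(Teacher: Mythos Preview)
Your proof is correct and follows essentially the same approach as the paper: differentiate the implicit relation $G(u,\alpha(u),\Lambda(u),\beta(u))=0$, invert the Jacobian to express $\alpha_j'(u)$, $\Lambda'(u)$ as linear combinations of $T(-\alpha(u))f_k\in H^1(\mb R)$, and then specialise at $u=\phi_\omega$ to solve the decoupled $2\times2$ Gram system for \eqref{doa}. Your treatment is simply more explicit, particularly in justifying $\alpha_j''(u)v\in H^1(\mb R)$ via the $H^2$-regularity of the $f_k$; one cosmetic point is that your symbol $M(u)$ clashes with the paper's later definition $M(u)=T(\alpha(u))u$, so you may wish to rename it.
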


\begin{proof}
By differentiating $G(u,\alpha(u),\Lambda(u),\beta(u))=0$ with respect to $u$, 
we have
\begin{align} \label{doa2}
\begin{bmatrix}
\alpha_0'(u) \\ \alpha_1'(u) \\ \Lambda'(u) \\ \beta'(u)
\end{bmatrix}
&=-\left[
\frac{\pt G}{\pt(\alpha,\Lambda,\mu)}(u,\alpha(u),\Lambda(u),\beta(u))
\right]^{-1}
\begin{bmatrix}
T(-\alpha(u))T_0'(0)\phi_\omega \\ 
T(-\alpha(u))T_1'(0)\phi_\omega \\ 
T(-\alpha(u))\psi \\ 
T(-\alpha(u))B_\xi\phi_\omega
\end{bmatrix} \\ \notag
&\in H^1(\mb R)^4.
\end{align}
Similarly, we also see that $\alpha_j''(u)v\in H^1(\mb R)$.
Moreover, by substituting $\phi_\omega$ for $u$ in \eqref{doa2},
we obtain \eqref{doa}.
\end{proof}

\begin{lemma} \label{s4}
For $u\in U_{\ve_0}(\phi_\omega)$ satisfying $Q_\xi(u)=Q_\xi(\phi_\omega)$,
\[
\beta(u)
=O(|\Lambda(u)|\|w(u)\|_{H^1}+\|w(u)\|_{H^1}^2)
\]
as $\inf_{s\in\mb{R}^2}\|u-T(s)\phi_\omega\|_{H^1}\to0$.
\end{lemma}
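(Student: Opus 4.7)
The plan is to turn the constraint $Q_\xi(u) = Q_\xi(\phi_\omega)$ into a scalar relation for $\beta(u)$ and then close it by absorption. I will exploit two basic features: $Q_\xi = \xi_0 Q_0 + \xi_1 Q_1$ is invariant under the action $T$, and $Q_\xi(v) = \tfrac12(B_\xi v, v)_{L^2}$ is a symmetric quadratic form whose gradient is $B_\xi$, so its Taylor expansion is exact and has only three terms.

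First, by $T$-invariance of $Q_\xi$ and Lemma~\ref{s1} (which gives $Q_\xi(\Psi(\lambda)) = Q_\xi(\phi_\omega)$), substituting the decomposition \eqref{Tau} into the hypothesis yields
\[
Q_\xi\bigl(\Psi(\Lambda) + \beta B_\xi\phi_\omega + w\bigr) = Q_\xi\bigl(\Psi(\Lambda)\bigr),
\]
with the shorthand $\Lambda = \Lambda(u)$, $\beta = \beta(u)$, $w = w(u)$. The exact quadratic expansion of $Q_\xi$ around $\Psi(\Lambda)$ then gives
\[
\bigl(B_\xi\Psi(\Lambda),\, \beta B_\xi\phi_\omega + w\bigr)_{L^2} + Q_\xi\bigl(\beta B_\xi\phi_\omega + w\bigr) = 0.
\]

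Next, I expand $B_\xi\Psi(\Lambda) = B_\xi\phi_\omega + \Lambda B_\xi\psi + \rho(\Lambda)B_\xi^2\phi_\omega$ and single out the coefficient of $\beta$. The orthogonality $(w, B_\xi\phi_\omega)_{L^2} = 0$ from Lemma~\ref{s2} ensures that the leading term is $\beta\|B_\xi\phi_\omega\|_{L^2}^2$. Bounding the cross terms by Cauchy--Schwarz, using $\rho(\Lambda) = O(\Lambda^2)$ from Lemma~\ref{s1} and the fact that $B_\xi$ is a first-order operator with smooth coefficients (so $|(B_\xi v,h)_{L^2}| \le C\|v\|_{H^1}\|h\|_{L^2}$ and $|Q_\xi(w)| \le C\|w\|_{H^1}^2$), I expect to arrive at an inequality of the form
\[
|\beta|\,\|B_\xi\phi_\omega\|_{L^2}^2 \le C\bigl(|\Lambda| + |\beta| + \|w\|_{H^1}\bigr)|\beta| + C\bigl(|\Lambda|\,\|w\|_{H^1} + \|w\|_{H^1}^2\bigr).
\]

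Finally, I will invoke $\Lambda(u), \beta(u), \|w(u)\|_{H^1} \to 0$ as $\inf_s\|u - T(s)\phi_\omega\|_{H^1} \to 0$; this follows from the $T$-invariance relations recorded in Lemma~\ref{s2}, the continuity of the maps $\alpha, \Lambda, \beta, w$, and their vanishing at $\phi_\omega$ noted in the remark after Lemma~\ref{s2}. For $u$ close enough to the orbit, the factor $C(|\Lambda|+|\beta|+\|w\|_{H^1})$ becomes smaller than $\|B_\xi\phi_\omega\|_{L^2}^2/2$ and can be absorbed into the left-hand side, leaving exactly $|\beta| \le C'(|\Lambda|\,\|w\|_{H^1} + \|w\|_{H^1}^2)$, which is the claim. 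The main obstacle --- modest, as obstacles go --- is this absorption step: the $\beta$-proportional error terms $\beta\Lambda$, $\beta\,\|w\|_{H^1}$, and $\beta^2$ must all be dominated by the smallness of the orbit displacement, so one must verify a priori smallness of $|\Lambda| + |\beta| + \|w\|_{H^1}$ uniformly before rearranging.
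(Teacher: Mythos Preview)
Your proposal is correct and follows essentially the same approach as the paper: use $T$-invariance together with $Q_\xi(\Psi(\Lambda))=Q_\xi(\phi_\omega)$, expand the exact quadratic $Q_\xi$ at $\Psi(\Lambda)$, isolate the coefficient $\beta\|B_\xi\phi_\omega\|_{L^2}^2$ via the orthogonality $(w,B_\xi\phi_\omega)_{L^2}=0$, and absorb the $o(1)$ prefactor on $\beta$ using the a priori smallness of $|\Lambda|+|\beta|+\|w\|_{H^1}$. The paper compresses the absorption into the single line $\beta[\|B_\xi\phi_\omega\|_{L^2}^2+o(1)]+Q_\xi(w)+O(|\Lambda|\|w\|_{L^2})=0$, but the content is identical to what you wrote.
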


\begin{proof}
For $u\in U_{\ve_0}(\phi_\omega)$ satisfying $Q_\xi(u)=Q_\xi(\phi_\omega)$, 
by \eqref{Tau}, \eqref{Qxi} and $(B_\xi\phi_\omega,w(u))_{L^2}=0$,
we have
\begin{align*}
0&=Q_\xi(u)-Q_\xi(\phi_\omega)
=Q_\xi(T(\alpha(u))u)-Q_\xi(\phi_\omega) \\
&=Q_\xi(\Psi(\Lambda(u))+\beta(u)B_\xi\phi_\omega+w(u))-Q_\xi(\phi_\omega) \\
&=Q_\xi(\Psi(\Lambda(u)))-Q_\xi(\phi_\omega)
+\beta(u)^2Q_\xi(B_\xi\phi_\omega)
+Q_\xi(w(u)) \\
&\quad+\beta(u)(B_\xi\Psi(\Lambda(u)),B_\xi\phi_\omega)_{L^2}
+\beta(u)(B_\xi^2\phi_\omega,w(u))_{L^2}
+(B_\xi\Psi(\Lambda(u)),w(u))_{L^2} \\
&=\beta(u)[\|B_\xi\phi_\omega\|_{L^2}^2+o(1)]
+Q_\xi(w(u))
+O(|\Lambda(u)|\|w(u)\|_{L^2}).
\end{align*}
This finishes the proof.
\end{proof}

For $u\in U_{\ve_0}(\phi_\omega)$, we define
\begin{align*}
M(u)
=T(\alpha(u))u,\quad
A(u)
=-(M(u),i\psi)_{L^2}.
\end{align*}
Then 
\begin{align*}
A'(u)
&=-\sum_{j=0}^1(T_j'(0)M(u),i\psi)_{L^2}\alpha_j'(u)
-iT(-\alpha(u))\psi, \\
A''(u)v
&=-\sum_{j=0}^1(T_j'(0)M(u),i\psi)_{L^2}\alpha_j''(u)v \\
&\quad-\sum_{j=0}^1\langle\alpha_j'(u),v\rangle
\sum_{k=0}^1(T_k'(0)T_j'(0)M(u),i\psi)_{L^2}\alpha_k'(u) \\
&\quad-\sum_{j=0}^1\langle\alpha_j'(u),v\rangle T(-\alpha(u))B_j\psi
-\sum_{j=0}^1(T_j'(0)T(\alpha(u))v,i\psi)_{L^2}\alpha_j'(u).
\end{align*}
By Lemma \ref{s3}, 
we see that $A'(u)$, $A''(u)v\in H^1(\mb R)$ 
for all $u\in U_{\ve_0}(\phi_\omega)$ and $v\in H^1(\mb R)$.
Moreover, by \eqref{doa}, we deduce
\begin{align} \label{A1}
iA'(\phi_\omega)
&=\psi, \\ \label{A2}
iA''(\phi_\omega)\psi
&=-\sum_{j=0}^1(B_j\psi,\psi)_{L^2}i\alpha_j'(\phi_\omega).
\end{align}
Since $M$ and $A$ are invariant under $T$,
it follows that
\begin{equation} \label{PQ}
0
=\frac{d}{ds}A(T_j(s)u)|_{s=0}
=\langle A'(u),T_j'(0)u\rangle
=-\langle Q'_j(u),iA'(u)\rangle.
\end{equation}
For $u\in U_{\ve_0}(\phi_\omega)$, we define
\[
P(u)
=\langle E'(u),iA'(u)\rangle.
\]
Then by \eqref{PQ}, we have
\[
P(u)
=\langle S_\omega'(u),iA'(u)\rangle.
\]
By $S_\omega'(\phi_\omega)=0$, \eqref{A1} and
$S_\omega''(\phi_\omega)\psi
=-B_\xi\phi_\omega$,
we obtain
\begin{equation} \label{P1}
P'(\phi_\omega)
=-B_\xi\phi_\omega.
\end{equation}
Moreover, by \eqref{A2} and \eqref{doa}, we deduce
\begin{align} \notag
\langle P''(\phi_\omega)\psi,\psi\rangle
&=\langle S_\omega'''(\phi_\omega)(\psi,\psi),\psi\rangle
+2\langle S_\omega''(\phi_\omega)\psi,iA''(\phi_\omega)\psi\rangle \\ \label{P2}
&=\langle S_\omega'''(\phi_\omega)(\psi,\psi),\psi\rangle
+2(B_\xi\psi,\psi)_{L^2}.
\end{align}
We note that $P$ is invariant under $T$.

\begin{lemma} \label{AP}
Let $I$ be an interval of $\mb{R}$.
Let $u\in C(I,H^1(\mb R))\cap C^1(I,H^{-1}(\mb R))$ be a solution of \eqref{gDNLS},
and assume that $u(t)\in U_{\ve_0}(\phi_\omega)$ for all $t\in I$.
Then,
\[
\frac{d}{dt}A(u(t))
=P(u(t))
\]
for all $t\in I$.
\end{lemma}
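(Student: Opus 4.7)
The plan is to apply the chain rule to $t\mapsto A(u(t))$, substitute the Hamiltonian form of \eqref{gDNLS}, namely $\pt_tu=-iE'(u)$ in $H^{-1}(\mb R)$, and use the skew-adjointness of multiplication by $i$ with respect to the real $L^2$ pairing. Since $A\colon U_{\ve_0}(\phi_\omega)\to\mb R$ is $C^3$ and Lemma \ref{s3} yields $A'(u)\in H^1(\mb R)$ for every $u\in U_{\ve_0}(\phi_\omega)$, the duality pairing $\langle A'(u(t)),\pt_tu(t)\rangle_{H^1,H^{-1}}$ is well defined for each $t\in I$.

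Assuming the chain rule, the computation is essentially one line. Using $\pt_tu=-iE'(u)$ together with the identity $(iv,w)_{L^2}=-(v,iw)_{L^2}$, which extends to the $H^1$-$H^{-1}$ duality pairing since multiplication by $i$ is bounded on both $H^1$ and $H^{-1}$, one computes
\[
\frac{d}{dt}A(u(t))
=\langle A'(u(t)),\pt_tu(t)\rangle
=-\langle A'(u(t)),iE'(u(t))\rangle
=\langle E'(u(t)),iA'(u(t))\rangle
=P(u(t)).
\]

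The real obstacle is justifying the chain rule itself, since $u$ is only $C^1$ into $H^{-1}$ (not into $H^1$), while $A$ is defined on an open subset of $H^1$. I would handle this by mollifying in time. Fix a compact subinterval $J\subset I$, and for a standard mollifier $\rho_\ve$ on $\mb R$ let $u_\ve$ denote the time-convolution of $u$ (extended suitably) with $\rho_\ve$. For $\ve$ small, $u_\ve\in C^\infty(J,H^1(\mb R))$, $u_\ve\to u$ uniformly on $J$ in $H^1$, and $\pt_tu_\ve\to\pt_tu$ uniformly on $J$ in $H^{-1}$. For the smooth curve $u_\ve$ the chain rule is standard, so
\[
A(u_\ve(t_2))-A(u_\ve(t_1))
=\int_{t_1}^{t_2}\langle A'(u_\ve(s)),\pt_su_\ve(s)\rangle\,ds
\]
for $t_1,t_2\in J$. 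The continuity of $A'\colon U_{\ve_0}(\phi_\omega)\to H^1(\mb R)$ given by Lemma \ref{s3}, combined with the joint continuity of the $H^1$-$H^{-1}$ duality pairing, permits passage to the limit $\ve\to0$; this yields the corresponding integral identity for $u$, which differentiates at any interior point of $J$ to give the claim.
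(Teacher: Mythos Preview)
Your argument is correct and is essentially what the paper does: the paper performs the identical one-line computation
\[
\frac{d}{dt}A(u(t))=\langle\pt_tu(t),A'(u(t))\rangle=\langle E'(u(t)),iA'(u(t))\rangle=P(u(t)),
\]
and for the chain-rule justification (which, as you rightly identify, is the only nontrivial point since $u$ is merely $C^1$ into $H^{-1}$ while $A$ lives on $H^1$) it simply cites Lemma~4.6 of Grillakis--Shatah--Strauss \cite{GSS1} rather than spelling out a mollification argument. Your time-mollification sketch is a valid substitute for that citation; the only small imprecision is that Lemma~\ref{s3} as stated asserts $A'(u)\in H^1(\mb R)$ but not the continuity of $u\mapsto A'(u)$ in $H^1$, though this continuity is immediate from the explicit formula \eqref{doa2} and the $C^3$ regularity of $\alpha$.
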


\begin{proof}
By Lemma 4.6 of \cite{GSS1}, 
we see that $t\mapsto A(u(t))$ is $C^1$ on $I$, and
\begin{align*}
\frac{d}{dt}A(u(t))
=\left\langle\pt_tu(t),A'(u(t))\right\rangle
=\langle E'(u(t)),iA'(u(t))\rangle
=P(u(t))
\end{align*}
for all $t\in I$.
This completes the proof.
\end{proof}

Let
\[
\nu
=\langle S_\omega'''(\phi_\omega)(\psi,\psi),\psi\rangle
+3(B_\xi\psi,\psi)_{L^2}.
\]
Then $\nu\ne0$ by Proposition \ref{ud} (i).

\begin{lemma}
For $\lambda\in(-\lambda_0,\lambda_0)$,
\begin{align} \label{Sl}
&S_\omega(\Psi(\lambda))
-S_\omega(\phi_\omega)
=\frac{\lambda^3}{6}\nu+o(\lambda^3), \\ \label{Pl}
&P(\Psi(\lambda))
=\frac{\lambda^2}2\nu+o(\lambda^2)
\end{align}
as $\lambda\to0$.
\end{lemma}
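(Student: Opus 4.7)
The plan is to treat both identities as Taylor expansions at $\lambda=0$ of the smooth scalar functions $\lambda\mapsto S_\omega(\Psi(\lambda))$ and $\lambda\mapsto P(\Psi(\lambda))$. From the definition of $\Psi$ together with Lemma~\ref{s1}, I first read off
\[
\Psi(0)=\phi_\omega,\qquad \Psi'(0)=\psi,\qquad \Psi''(0)=\rho''(0)\,B_\xi\phi_\omega,
\]
where $\rho'(0)=0$ and $\rho''(0)=-(B_\xi\psi,\psi)_{L^2}/\|B_\xi\phi_\omega\|_{L^2}^2$. The hypothesis $3/2\le\sigma<2$ ensures $E\in C^3(H^1(\mb R),\mb R)$, which provides just enough regularity to iterate the chain rule to order three.

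For \eqref{Sl}, I would compute $\frac{d^k}{d\lambda^k}S_\omega(\Psi(\lambda))\big|_{\lambda=0}$ for $k=1,2,3$. The $k=1$ term is $\langle S_\omega'(\phi_\omega),\psi\rangle=0$ since $\phi_\omega$ is a critical point. The $k=2$ term reduces to $\langle S_\omega''(\phi_\omega)\psi,\psi\rangle=-(B_\xi\phi_\omega,\psi)_{L^2}=0$ by Proposition~\ref{ud}(i). For $k=3$, only
\[
\langle S_\omega'''(\phi_\omega)(\psi,\psi),\psi\rangle+3\langle S_\omega''(\phi_\omega)\psi,\Psi''(0)\rangle
\]
survives; substituting $S_\omega''(\phi_\omega)\psi=-B_\xi\phi_\omega$ together with the explicit value of $\rho''(0)$ turns the cross term into $3(B_\xi\psi,\psi)_{L^2}$, so the third derivative equals $\nu$. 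Dividing by $3!$ yields \eqref{Sl}.

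For \eqref{Pl}, the same strategy applies using \eqref{P1} and \eqref{P2} that were just established. We have $P(\phi_\omega)=0$ because $S_\omega'(\phi_\omega)=0$, and $\langle P'(\phi_\omega),\psi\rangle=-(B_\xi\phi_\omega,\psi)_{L^2}=0$, so the constant and linear coefficients both vanish. The second derivative is $\langle P''(\phi_\omega)\psi,\psi\rangle+\langle P'(\phi_\omega),\Psi''(0)\rangle$; using \eqref{P2} for the first term and $\langle P'(\phi_\omega),\Psi''(0)\rangle=-\rho''(0)\|B_\xi\phi_\omega\|_{L^2}^2=(B_\xi\psi,\psi)_{L^2}$ for the second, the sum is exactly $\nu$, and dividing by $2!$ gives \eqref{Pl}.

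The computation is essentially bookkeeping, not a real obstacle. The one point worth verifying carefully is the smoothness of $\lambda\mapsto P(\Psi(\lambda))$ up to order two, which boils down to the $H^1$-valuedness of $A'$ and $A''$ along $\Psi$ recorded in Lemma~\ref{s3} together with the $C^3$ regularity of $S_\omega$.
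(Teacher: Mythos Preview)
Your proposal is correct and follows essentially the same route as the paper: both arguments are Taylor expansions using $S_\omega'(\phi_\omega)=0$, $S_\omega''(\phi_\omega)\psi=-B_\xi\phi_\omega$, $(B_\xi\phi_\omega,\psi)_{L^2}=0$, the expansion \eqref{r1} of $\rho$, and formulas \eqref{P1}--\eqref{P2}. The only cosmetic difference is that the paper expands $S_\omega$ and $P$ in the $H^1$-increment $\lambda\psi+\rho(\lambda)B_\xi\phi_\omega$ and then substitutes \eqref{r1}, whereas you differentiate the scalar compositions $\lambda\mapsto S_\omega(\Psi(\lambda))$ and $\lambda\mapsto P(\Psi(\lambda))$ directly via the chain rule; the computations match term by term.
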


\begin{proof}
Since
$S_\omega'(\phi_\omega)=0$,
$S_\omega''(\phi_\omega)\psi=-B_\xi\phi_\omega$ and
$(\psi,B_\xi\phi_\omega)_{L^2}=0$,
by Taylor's expansion and \eqref{r1},
we have
\begin{align*}
&S_\omega(\Psi(\lambda))
-S_\omega(\phi_\omega)
=S_\omega(\phi_\omega+\lambda\psi+\rho(\lambda)B_\xi\phi_\omega)
-S_\omega(\phi_\omega) \\
&=\langle 
S_\omega'(\phi_\omega),
\lambda\psi
+\rho(\lambda)B_\xi\phi_\omega
\rangle
+\frac{\lambda^2}{2}\langle S_\omega''(\phi_\omega)\psi,\psi\rangle
+\lambda\rho(\lambda)\langle S_\omega''(\phi_\omega)\psi,B_\xi\phi_\omega\rangle \\
&\quad+\frac{\lambda^3}6\langle S_\omega'''(\phi_\omega)(\psi,\psi),\psi\rangle
+o(\lambda^3) \\
&=\frac{\lambda^3}6[
\langle S_\omega'''(\phi_\omega)(\psi,\psi),\psi\rangle
+3(B_\xi\psi,\psi)_{L^2}
]
+o(\lambda^3).
\end{align*}
On the other hand,
by \eqref{r1}, \eqref{P1} and \eqref{P2}, we have
\begin{align*}
&P(\Psi(\lambda))
=P(\phi_\omega+\lambda\psi+\rho(\lambda)B_\xi\phi_\omega) \\
&=P(\phi_\omega)
+\langle P'(\phi_\omega),\lambda\psi+\rho(\lambda)B_\xi\phi_\omega\rangle
+\frac{\lambda^2}2\langle P''(\phi_\omega)\psi,\psi\rangle
+o(\lambda^2) \\
&=\frac{\lambda^2}2[\langle S_\omega'''(\phi_\omega)(\psi,\psi),\psi\rangle
+3(B_\xi\psi,\psi)_{L^2}]
+o(\lambda^2).
\end{align*}
This completes the proof.
\end{proof}

\begin{lemma} \label{SPu}
For $u\in U_{\ve_0}(\phi_\omega)$ satisfying 
$Q_\xi(u)=Q_\xi(\phi_\omega)$,
\begin{align}
&S_\omega(u)
-S_\omega(\phi_\omega)
=\frac{\Lambda(u)^3}6\nu
+\frac12\langle S_\omega''(\phi_\omega)w(u),w(u)\rangle
+o(|\Lambda(u)|^3+\|w(u)\|_{H^1}^2), \\
&\Lambda(u)P(u)
=\frac{\Lambda(u)^3}{2}\nu
+o(|\Lambda(u)|^3+\|w(u)\|_{H^1}^2)
\end{align}
as $\inf_{s\in\mb{R}^2}\|u-T(s)\phi_\omega\|_{H^1}\to0$.
\end{lemma}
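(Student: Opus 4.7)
The plan is to exploit the invariance of $S_\omega$ and $P$ under $T$ and Taylor-expand both quantities at $\phi_\omega$, feeding in the decomposition from Lemma~\ref{s2}. By invariance, $S_\omega(u)=S_\omega(T(\alpha(u))u)$ and $P(u)=P(T(\alpha(u))u)$, so I can work with $T(\alpha(u))u=\phi_\omega+h$, where
\[
h=\lambda\psi+(\rho(\lambda)+\beta(u))B_\xi\phi_\omega+w(u),\qquad \lambda=\Lambda(u).
\]
Note that $\|h\|_{H^1}=O(|\lambda|+\|w(u)\|_{H^1})$ since $\rho(\lambda)=O(\lambda^2)$ by Lemma~\ref{s1} and $\beta(u)=O(|\lambda|\|w(u)\|_{H^1}+\|w(u)\|_{H^1}^2)$ by Lemma~\ref{s4}.

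For the first identity I would Taylor-expand $S_\omega$ at $\phi_\omega$ to third order, which is legitimate because $E\in C^3(H^1(\mb R),\mb R)$ for $3/2\le\sigma<2$. The linear term vanishes since $S_\omega'(\phi_\omega)=0$. In the quadratic term $\tfrac12\langle S_\omega''(\phi_\omega)h,h\rangle$, the identity $S_\omega''(\phi_\omega)\psi=-B_\xi\phi_\omega$ combined with the orthogonalities $(B_\xi\phi_\omega,\psi)_{L^2}=0$ from Proposition~\ref{ud}(i) and $(B_\xi\phi_\omega,w(u))_{L^2}=0$ from Lemma~\ref{s2} collapses nearly every cross term, leaving $-\lambda(\rho(\lambda)+\beta(u))\|B_\xi\phi_\omega\|_{L^2}^2+\tfrac12\langle S_\omega''(\phi_\omega)w(u),w(u)\rangle$ at leading order, while the cubic term contributes $\tfrac{\lambda^3}{6}\langle S_\omega'''(\phi_\omega)(\psi,\psi),\psi\rangle$. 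Substituting the expansion $\rho(\lambda)=-\tfrac{(B_\xi\psi,\psi)_{L^2}}{2\|B_\xi\phi_\omega\|_{L^2}^2}\lambda^2+o(\lambda^2)$ turns $-\lambda\rho(\lambda)\|B_\xi\phi_\omega\|_{L^2}^2$ into $\tfrac{\lambda^3}{2}(B_\xi\psi,\psi)_{L^2}+o(\lambda^3)$, which combines with the cubic piece to give $\tfrac{\lambda^3}{6}\nu$. For the second identity I would Taylor-expand $P$ only to second order, using $P(\phi_\omega)=0$, $P'(\phi_\omega)=-B_\xi\phi_\omega$ from \eqref{P1}, and $\langle P''(\phi_\omega)\psi,\psi\rangle=\langle S_\omega'''(\phi_\omega)(\psi,\psi),\psi\rangle+2(B_\xi\psi,\psi)_{L^2}$ from \eqref{P2}; the same orthogonalities reduce $P(u)$ to $-(\rho(\lambda)+\beta(u))\|B_\xi\phi_\omega\|_{L^2}^2+\tfrac{\lambda^2}{2}\langle P''(\phi_\omega)\psi,\psi\rangle$ plus controllable errors, and the substitution of $\rho$ followed by multiplication by $\lambda$ produces $\tfrac{\lambda^3}{2}\nu$.

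The main technical obstacle is absorbing every cross and Taylor-remainder term into $o(|\Lambda(u)|^3+\|w(u)\|_{H^1}^2)$. The $\beta$-contributions are controlled via Lemma~\ref{s4}, those of $\rho$ beyond its leading $\lambda^2$ via Lemma~\ref{s1}, and the Taylor remainders via the $C^3$ (resp.\ $C^2$) regularity of $S_\omega$ (resp.\ $P$). The monomials needing to be shown $o(|\lambda|^3+\|w(u)\|_{H^1}^2)$ are $\lambda^2\|w(u)\|_{H^1}$, $|\lambda|\|w(u)\|_{H^1}^2$, $\|w(u)\|_{H^1}^3$ and $\lambda^4$; the last three are immediate, and the only delicate one is $\lambda^2\|w(u)\|_{H^1}$, which I would handle by Young's inequality $2|\lambda|^{3/2}\|w(u)\|_{H^1}\le|\lambda|^3+\|w(u)\|_{H^1}^2$, giving $\lambda^2\|w(u)\|_{H^1}/(|\lambda|^3+\|w(u)\|_{H^1}^2)\le|\lambda|^{1/2}/2\to 0$. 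Once these estimates are in place, both identities follow by collecting the contributions above.
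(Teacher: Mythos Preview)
Your proposal is correct. The paper, however, organizes the computation differently: instead of expanding $S_\omega$ and $P$ directly at $\phi_\omega$ in the single increment $h=\lambda\psi+(\rho(\lambda)+\beta(u))B_\xi\phi_\omega+w(u)$, it first isolates the curve contribution in the preceding lemma, obtaining the expansions $S_\omega(\Psi(\lambda))-S_\omega(\phi_\omega)=\tfrac{\lambda^3}{6}\nu+o(\lambda^3)$ and $P(\Psi(\lambda))=\tfrac{\lambda^2}{2}\nu+o(\lambda^2)$, and then Taylor-expands $S_\omega$ and $P$ at the moving base point $\Psi(\Lambda(u))$ in the increment $\beta(u)B_\xi\phi_\omega+w(u)$. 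In the paper's route the $\rho(\lambda)$ substitution is already packaged into \eqref{Sl}--\eqref{Pl}, so fewer cross terms need to be tracked in the present lemma. Your single-expansion approach is more self-contained and makes the role of each orthogonality transparent; it also makes explicit the Young-inequality estimate $\lambda^2\|w(u)\|_{H^1}=o(|\lambda|^3+\|w(u)\|_{H^1}^2)$, which the paper leaves implicit in its $o$-bookkeeping. Either organization yields the same identities under the same regularity assumption $E\in C^3$.
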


\begin{proof}
By Lemmas \ref{s2}, \ref{s4} and \eqref{Sl}, 
we have
\begin{align*}
&S_\omega(u)
-S_\omega(\phi_\omega) \\
&=S_\omega(M(u))
-S_\omega(\phi_\omega)
=S_\omega(\Psi(\Lambda(u))+\beta(u)B_\xi\phi_\omega+w(u))
-S_\omega(\phi_\omega) \\
&=S_\omega(\Psi(\Lambda(u)))
-S_\omega(\phi_\omega)
+\langle S_\omega'(\Psi(\Lambda(u))),\beta(u)B_\xi\phi_\omega+w(u)\rangle \\
&\quad+\frac12\langle S_\omega''(\Psi(\Lambda(u)))(\beta(u)B_\xi\phi_\omega+w(u)),\beta(u)B_\xi\phi_\omega+w(u)\rangle \\
&\quad+o(\|\beta(u)B_\xi\phi_\omega+w(u)\|_{H^1}^2) \\
&=\frac{\Lambda(u)^3}6\nu
+\Lambda(u)\langle S_\omega''(\phi_\omega)\psi,\beta(u)B_\xi\phi_\omega+w(u)\rangle
+\frac12\langle S_\omega''(\phi_\omega)w(u),w(u)\rangle \\
&\quad+o(|\Lambda(u)|^3+\|w(u)\|_{H^1}^2) \\
&=\frac{\Lambda(u)^3}6\nu
+\frac12\langle S_\omega''(\phi_\omega)w(u),w(u)\rangle
+o(|\Lambda(u)|^3+\|w(u)\|_{H^1}^2).
\end{align*}
On the other hand,
by Lemmas \ref{s2}, \ref{s4} and \eqref{Pl},
we deduce
\begin{align*}
&P(u) \\
&=P(M(u))
=P(\Psi(\Lambda(u))+\beta(u)B_\xi\phi_\omega+w(u)) \\
&=P(\Psi(\Lambda(u)))
+\langle P'(\Psi(\Lambda(u))),\beta(u)B_\xi\phi_\omega+w(u)\rangle \\
&\quad+O(\|\beta(u)B_\xi\phi_\omega+w(u)\|_{H^1}^2) \\
&=\frac{\Lambda(u)^2}{2}\nu
+\langle P'(\phi_\omega),w(u)\rangle
+O(|\Lambda(u)|^3+\|w(u)\|_{H^1}^2+|\Lambda(u)|\|w(u)\|_{H^1}) \\
&=\frac{\Lambda(u)^2}{2}\nu
+\langle S_\omega''(\phi_\omega)\psi,w(u)\rangle \\
&\quad+O(|\Lambda(u)|^3+\|w(u)\|_{H^1}^2+|\Lambda(u)|\|w(u)\|_{H^1}) \\
&=\frac{\Lambda(u)^2}{2}\nu
+O(|\Lambda(u)|^3+\|w(u)\|_{H^1}^2+|\Lambda(u)|\|w(u)\|_{H^1}).
\end{align*}
This completes the proof.
\end{proof}

\begin{proof}[Proof of Theorem \ref{mt}]
Without loss of generality, we may assume that $\nu>0$.
Note that by \eqref{Sl}, 
there exists $\lambda_1\in(0,\lambda_0)$ such that 
$S_\omega(\phi_\omega)-S_\omega(\Psi(\lambda))>0$ for all 
$\lambda\in(-\lambda_1,0)$.
By Lemma \ref{SPu} and Proposition \ref{ud} (ii), 
we see that there exist $\ve_1\in(0,\ve_0)$ and $c>0$ such that
\begin{equation} \label{f2}
S_\omega(u)
-S_\omega(\phi_\omega)
-\Lambda(u)P(u)
\ge -c\Lambda(u)^3
\end{equation}
for all $u\in U_{\ve_1}(\phi_\omega)$.

Suppose that $T(\omega t)\phi_\omega$ is stable.
Let $u_\lambda(t)$ be the solution of \eqref{gDNLS} with 
$u_\lambda(0)=\Psi(\lambda)$.
Since $T(\omega t)\phi_\omega$ is stable, 
there exists $\lambda_2\in(0,\lambda_1)$ such that
$u_\lambda(t)\in U_{\ve_1}(\phi_\omega)$
for all $\lambda\in(-\lambda_2,\lambda_2)$ and $t\ge0$.
Let $\lambda\in(-\lambda_2,0)$.
Then by the conservation of $S_\omega$ and \eqref{f2}, we have
\begin{align*}
0&<\delta_\lambda
:=S_\omega(\phi_\omega)
-S_\omega(u_\lambda(0)) \\
&=S_\omega(\phi_\omega)
-S_\omega(u_\lambda(t))
\le c\Lambda(u_\lambda(t))^3
-\Lambda(u_\lambda(t))P(u_\lambda(t))
\end{align*}
for all $t\ge0$.
By this inequality, $\Lambda(u_\lambda(0))=\lambda<0$ and continuity of $t\mapsto\Lambda(u_\lambda(t))$,
we see that $\Lambda(u_\lambda(t))<0$ for all $t\ge0$.
Thus, we have $\delta_\lambda<\lambda_0P(u_\lambda(t))$ for all $t\ge0$.
Moreover, by Lemma \ref{AP}, we have
\[
\frac{d}{dt}A(u_\lambda(t))
=P(u_\lambda(t)),\quad
t\ge0.
\]
Therefore, we see that 
$A(u_\lambda(t))\to\infty$ as $t\to+\infty$.
This contradicts the fact that there exists $C>0$ such that
$|A(u)|\le C$
for all $u\in U_{\ve_0}(\phi_\omega)$.

Hence, $T(\omega t)\phi_\omega$ is unstable.
\end{proof}

\appendix
\section{Proof of Lemma \ref{d3}} \label{sec:cul}

In this section, 
we prove Lemma \ref{d3}.
Throughout this section, 
let $1<\sigma<2$ and 
$z_0=z_0(\sigma)\in(-1,1)$ satisfy $F_\sigma(z_0)=0$.
For $\omega\in\Omega$,
we define
\begin{align*}
\kappa_\omega
&=\sqrt{4\omega_0-\omega_1^2}, \\
\tilde\kappa_\omega
&=2^{1/\sigma-2}\sigma^{-1}(1+\sigma)^{1/\sigma}\kappa_\omega^{2/\sigma-2}\omega_0^{-1/2\sigma-1/2}.
\end{align*}
Then we have
\begin{align} \label{ptk}
&\pt_{\omega_0}\kappa_\omega
=\frac{2}{\kappa_{\omega}},\quad
\pt_{\omega_1}\kappa_\omega
=-\frac{\omega_1}{\kappa_\omega}, \\
&\pt_{\omega_0}\tilde\kappa_\omega
=\tilde\kappa_\omega\left[
\frac{4(1-\sigma)}{\sigma\kappa_\omega^2}
-\frac{1+\sigma}{2\sigma\omega_0}
\right],\quad
\pt_{\omega_1}\tilde\kappa_\omega
=-\tilde\kappa_\omega\frac{2(1-\sigma)\omega_1}{\sigma\kappa_\omega^2}.
\end{align}
For $\omega\in\Omega$ and $n\in\mb Z_+$, we define
\[
\alpha_{n,\omega}
=\int_0^\infty\left(
\cosh(\sigma\kappa_\omega x)
-\frac{\omega_1}{2\sqrt{\omega_0}}
\right)^{-1/\sigma-n}\,dx.
\]
Then it follows from \cite[Lemmas A.1 and A.2]{LSS13} that
\begin{align}
\pt_{\omega_0}\alpha_{0,\omega}
&=-\frac{2}{\kappa_\omega^2}\alpha_{0,\omega}
-\frac{\omega_1}{4\sigma\omega_0^{3/2}}\alpha_{1,\omega}, \\
\pt_{\omega_1}\alpha_{0,\omega}
&=\frac{\omega_1}{\kappa_\omega^2}\alpha_{0,\omega}
+\frac{1}{2\sigma\sqrt{\omega_0}}\alpha_{1,\omega},\\
\pt_{\omega_0}\alpha_{1,\omega}
&=-\frac{\omega_1}{\sigma\sqrt{\omega_0}\kappa_\omega^2}\alpha_{0,\omega}
-\frac{\omega_1^2(2+\sigma)+4\sigma\omega_0}{2\sigma\omega_0\kappa_\omega^2}\alpha_{1,\omega},\\ \label{pta}
\pt_{\omega_1}\alpha_{1,\omega}
&=\frac{2\sqrt{\omega_0}}{\sigma\kappa_\omega^2}\alpha_{0,\omega}
+\frac{2\omega_1(\sigma+1)}{\sigma\kappa_\omega^2}\alpha_{1,\omega}.
\end{align}

By \cite[Lemma A.3]{LSS13} and \eqref{Hd},
we obtain
\begin{align} \label{d21}
&\pt_{\omega_0}^2d(\omega)
=\frac{\tilde\kappa_\omega}{\omega_0^{1/2}}
(2\omega_1^2-8(\sigma-1)\omega_0)\alpha_{0,\omega}
-\frac{\tilde\kappa_\omega}{\omega_0}\kappa_\omega^2\omega_1\alpha_{1,\omega}
=\frac{\pt_{\omega_1}^2d(\omega)}{\omega_0}, \\
&\pt_{\omega_1}\pt_{\omega_0}d(\omega) \label{d22}
=4\tilde\kappa_\omega
\omega_1(\sigma-2)\omega_0^{1/2}\alpha_{0,\omega}
+2\tilde\kappa\kappa_\omega^2\alpha_{1,\omega}
=\pt_{\omega_0}\pt_{\omega_1}d(\omega).
\end{align}
Eq.\ \eqref{d21} implies that
\begin{align} \label{d31}
\omega_0\pt_{\omega_0}^3d(\omega)
&=\pt_{\omega_0}\pt_{\omega_1}^2d(\omega)
-\pt_{\omega_0}^2d(\omega), \\ \label{d32}
\pt_{\omega_1}^3d(\omega)
&=\omega_0\pt_{\omega_0}^2\pt_{\omega_1}d(\omega).
\end{align}
On the other hand, by differentiating \eqref{d22}, it follows from \eqref{ptk}--\eqref{pta} that
\begin{align} \notag
\pt_{\omega_0}^2\pt_{\omega_1}d(\omega)
&=\frac{2\omega_1\tilde\kappa_\omega\alpha_{0,\omega}}{\sigma\kappa_\omega^2\omega_0^{1/2}}[
4(2-3\sigma)(\sigma-2)\omega_0
-(\sigma-1)\kappa_\omega^2
] \\ \label{d33}
&\quad+\frac{\tilde\kappa_\omega\alpha_{1,\omega}}{\sigma\omega_0}[
4(2-\sigma)\omega_0
-2\sigma\omega_1^2
-(1+\sigma)\kappa_\omega^2
], \\ \notag
\pt_{\omega_0}\pt_{\omega_1}^2d(\omega)
&=\frac{4\omega_0^{1/2}\tilde\kappa_\omega\alpha_{0,\omega}}{\sigma\kappa_\omega^2}[
(3\sigma-2)(\sigma-2)\omega_1^2
+(\sigma-1)^2\kappa_\omega^2
] \\ \label{d34}
&\quad+\frac{2(3\sigma-2)\omega_1\tilde\kappa_\omega\alpha_{1,\omega}}{\sigma}.
\end{align}

Let $\omega_1=2z_0\sqrt{\omega_0}$. 
Then by 
$\det[d''(\omega)]=0$ and \eqref{d21},
we have 
\begin{equation} \label{dt}
(\pt_{\omega_0}\pt_{\omega_1}d(\omega))^2
=\pt_{\omega_0}^2d(\omega)\pt_{\omega_1}^2d(\omega)
=\omega_0(\pt_{\omega_0}^2d(\omega))^2.
\end{equation}
Let 
\[\xi
=(\xi_0,\xi_1)
=(-\omega_0\pt_{\omega_0}^2d(\omega),
\pt_{\omega_0}\pt_{\omega_1}d(\omega)).
\] 
Then $\xi$ is an eigenvector corresponding to the zero eigenvalue of $d''(\omega)$.
In order to calculate the value of 
$\frac{d^3}{d\lambda^3}d(\omega+\lambda\xi)|_{\lambda=0}$,
we establish the following lemma.

\begin{lemma} \label{sd3}
Let $\omega_1=2z_0\omega_0^{1/2}$.
\begin{enumerate}
\item
If $\pt_{\omega_0}\pt_{\omega_1}d(\omega)
=-\omega_0^{1/2}\pt_{\omega_0}^2d(\omega)$, then
\begin{align*}
&\frac{d^3}{d\lambda^3}d(\omega+\lambda\xi)|_{\lambda=0} \\
&=\omega_0^2(\pt_{\omega_0}^2d(\omega))^3[
-4\pt_{\omega_0}\pt_{\omega_1}^2d(\omega)
-4\omega_0^{1/2}\pt_{\omega_0}^2\pt_{\omega_1}d(\omega)
+\pt_{\omega_0}^2d(\omega)].
\end{align*}
\item
If $\pt_{\omega_0}\pt_{\omega_1}d(\omega)
=\omega_0^{1/2}\pt_{\omega_0}^2d(\omega)$, then
\begin{align*}
&\frac{d^3}{d\lambda^3}d(\omega+\lambda\xi)|_{\lambda=0} \\
&=\omega_0^2(\pt_{\omega_0}^2d(\omega))^3[
-4\pt_{\omega_0}\pt_{\omega_1}^2d(\omega)
+4\omega_0^{1/2}\pt_{\omega_0}^2\pt_{\omega_1}d(\omega)
+\pt_{\omega_0}^2d(\omega)].
\end{align*}
\end{enumerate}
\end{lemma}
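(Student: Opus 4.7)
The plan is to expand the third derivative directly by the chain rule, substitute the explicit form of $\xi$, and then use the already-derived identities \eqref{d31}, \eqref{d32}, and \eqref{dt} to eliminate the pure third derivatives $\pt_{\omega_0}^3 d$ and $\pt_{\omega_1}^3 d$ in terms of the mixed ones, finally splitting into the two sign cases from the degeneracy of $d''(\omega)$.

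More concretely, for notational compactness write $A=\pt_{\omega_0}^2 d(\omega)$, $B=\pt_{\omega_0}\pt_{\omega_1} d(\omega)$, $C=\pt_{\omega_0}^2\pt_{\omega_1} d(\omega)$, $D=\pt_{\omega_0}\pt_{\omega_1}^2 d(\omega)$, so that $\xi_0=-\omega_0 A$ and $\xi_1=B$. First I would write the trinomial expansion
\[
\frac{d^3}{d\lambda^3}d(\omega+\lambda\xi)\Big|_{\lambda=0}
=\xi_0^3\,\pt_{\omega_0}^3 d(\omega)
+3\xi_0^2\xi_1\,C
+3\xi_0\xi_1^2\,D
+\xi_1^3\,\pt_{\omega_1}^3 d(\omega).
\]
Then I would substitute $\pt_{\omega_0}^3 d(\omega)=\omega_0^{-1}(D-A)$ from \eqref{d31} and $\pt_{\omega_1}^3 d(\omega)=\omega_0 C$ from \eqref{d32}, and plug in $\xi_0^3=-\omega_0^3A^3$, $\xi_0^2\xi_1=\omega_0^2 A^2 B$, $\xi_0\xi_1^2=-\omega_0 A B^2$, $\xi_1^3=B^3$. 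This produces
\[
\omega_0^2 A^3(A-D)+3\omega_0^2 A^2 B\,C-3\omega_0 A B^2 D+\omega_0 B^3 C.
\]

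At this point I would invoke the degeneracy relation \eqref{dt}, which gives $B^2=\omega_0 A^2$, equivalently $B=\varepsilon\,\omega_0^{1/2}A$ with $\varepsilon\in\{-1,+1\}$, and split into the two cases of the lemma. In each case $B^3=\varepsilon\,\omega_0^{3/2}A^3$ and $B^2=\omega_0 A^2$, so the last two terms combine to $-3\omega_0^2 A^3 D+\varepsilon\,\omega_0^{5/2}A^3 C$, while the $C$-term gives $3\varepsilon\,\omega_0^{5/2}A^3 C$. Collecting yields
\[
\omega_0^2 A^3\bigl[A-4D+4\varepsilon\,\omega_0^{1/2}C\bigr],
\]
which is exactly (i) when $\varepsilon=-1$ (i.e.\ $B=-\omega_0^{1/2}A$) and (ii) when $\varepsilon=+1$.

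The proof is therefore a bookkeeping calculation with no conceptual obstacle; the only care needed is to track the signs of $\xi_0$ and of $\varepsilon$ consistently, and to verify that \eqref{d31} and \eqref{d32} (both of which follow from the identity $\pt_{\omega_0}^2 d(\omega)=\omega_0^{-1}\pt_{\omega_1}^2 d(\omega)$ in \eqref{d21}) are applied before rather than after substituting the degeneracy relation, so that no spurious cancellation occurs. The separation into the two sign cases is exactly what makes the statement useful downstream: the authors will plug in the explicit expressions \eqref{d21}--\eqref{d34} together with the defining equation $F_\sigma(z_0)=0$ to check that the bracketed factor is nonzero for $3/2\le\sigma<2$.
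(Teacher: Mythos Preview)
Your proof is correct and follows essentially the same route as the paper's: expand the trinomial, use \eqref{d31}--\eqref{d32} to eliminate the pure third derivatives, invoke the degeneracy relation \eqref{dt} to replace $B^2$ by $\omega_0 A^2$, and then split into the two sign cases. The only cosmetic difference is that the paper first collects everything into the single expression $\omega_0^2 A^2[-4AD+4BC+A^2]$ before substituting $B=\pm\omega_0^{1/2}A$, whereas you substitute the sign case one line earlier; the computations are otherwise identical.
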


\begin{proof}
By \eqref{d31}, \eqref{d32} and \eqref{dt}, we have
\begin{align*}
\xi_0^3\pt_{\omega_0}^3d(\omega)
&=-\omega_0^2(\pt_{\omega_0}^2d(\omega))^3\pt_{\omega_0}\pt_{\omega_1}^2d(\omega)
+\omega_0^2(\pt_{\omega_0}^2d(\omega))^4, \\
\xi_0^2\xi_1\pt_{\omega_0}^2\pt_{\omega_1}d(\omega)
&=\omega_0^2(\pt_{\omega_0}^2d(\omega))^2\pt_{\omega_0}\pt_{\omega_1}d(\omega)\pt_{\omega_0}^2\pt_{\omega_1}d(\omega), \\
\xi_0\xi_1^2\pt_{\omega_0}\pt_{\omega_1}^2d(\omega)
&=-\omega_0^2(\pt_{\omega_0}^2d(\omega))^4
-\omega_0^2(\pt_{\omega}^2d(\omega))^3\pt_{\omega_0}^3d(\omega), \\
\xi_1^3\pt_{\omega_1}^3d(\omega)
&=\omega_0^2(\pt_{\omega_0}^2d(\omega))^2\pt_{\omega_0}\pt_{\omega_1}d(\omega)\pt_{\omega_0}^2\pt_{\omega_1}d(\omega).
\end{align*}
These imply that
\begin{align*}
&\frac{d^3}{d\lambda^3}d(\omega+\lambda\xi)|_{\lambda=0} \\
&=\xi_0^3\pt_{\omega_0}^3d(\omega)
+3\xi_0^2\xi_1\pt_{\omega_0}^2\pt_{\omega_1}d(\omega)
+3\xi_0\xi_1^2\pt_{\omega_0}\pt_{\omega_1}^2d(\omega)
+\xi_1^3\pt_{\omega_1}^3d(\omega) \\
&=\omega_0^2(\pt_{\omega_0}^2d(\omega))^2[
-4\pt_{\omega_0}^2d(\omega)\pt_{\omega_0}\pt_{\omega_1}^2d(\omega) \\
&\qquad+4\pt_{\omega_0}\pt_{\omega_1}d(\omega)\pt_{\omega_0}^2\pt_{\omega_1}d(\omega)
+(\pt_{\omega_0}^2d(\omega))^2].
\end{align*}
Then we obtain the conclusion.
\end{proof}

\begin{proof}[Proof of Lemma \ref{d3}]
Let $\omega_1=2z_0\sqrt{\omega_0}$. 
Then, by \eqref{d21}, \eqref{d22}, \eqref{d33} and \eqref{d34},
we have
\begin{align*}
\pt_{\omega_0}^2d(\omega)
&=8\tilde\kappa_\omega\omega_0^{1/2}\alpha_{0,\omega}(z_0^2-\sigma+1)
+8\tilde\kappa_\omega\omega_0^{1/2}\alpha_{1,\omega}z_0(z_0^2-1), \\
\pt_{\omega_0}\pt_{\omega_1}d(\omega)
&=8\tilde\kappa_\omega\omega_0z_0(\sigma-2)\alpha_{0,\omega}
+8\tilde\kappa_\omega\omega_0(1-z_0^2)\alpha_{1,\omega}, \\
\pt_{\omega_0}\pt_{\omega_1}^2d(\omega)
&=\frac{4\omega_0^{1/2}\tilde\kappa_\omega\alpha_{0,\omega}}{\sigma(1-z_0^2)}[
(3\sigma-2)(\sigma-2)z_0^2
+(\sigma-1)^2(1-z_0^2)
] \\
&\quad+\frac{4(3\sigma-2)z_0\omega_0^{1/2}\tilde\kappa_\omega\alpha_{1,\omega}}{\sigma}, \\
\pt_{\omega_0}^2\pt_{\omega_1}d(\omega)
&=\frac{4z_0\tilde\kappa_\omega\alpha_{0,\omega}}{\sigma(1-z_0^2)}[
(2-3\sigma)(\sigma-2)-(\sigma-1)(1-z_0^2)
] \\
&\quad+\frac{4\tilde\kappa_\omega\alpha_{1,\omega}}{\sigma}(
-\sigma z_0^2+z_0^2-2\sigma+1
), 
\end{align*}
If 
$\pt_{\omega_0}\pt_{\omega_1}d(\omega)
=-\omega^{1/2}\pt_{\omega_0}^2d(\omega)$,
we have 
$-(1-z_0-\sigma)\alpha_{0,\omega}
=(1-z_0^2)\alpha_{1,\omega}$.
This implies that
\begin{align*}
&-4\pt_{\omega_0}\pt_{\omega_1}^2d(\omega)
-4\omega_0^{1/2}\pt_{\omega_0}^2\pt_{\omega_1}d(\omega)
+\pt_{\omega_0}^2d(\omega) \\
&=\frac{8\omega_0^{1/2}\tilde\kappa_\omega\alpha_{0,\omega}}
{\sigma(1-z_0^2)}\sigma(\sigma-1)(z_0-1)^2(z_0+1)
\ne0.
\end{align*}
Similarly, if 
$\pt_{\omega_0}\pt_{\omega_1}d(\omega)
=\omega^{1/2}\pt_{\omega_0}^2d(\omega)$,
we obtain
\begin{align*}
&-4\pt_{\omega_0}\pt_{\omega_1}^2d(\omega)
+4\omega_0^{1/2}\pt_{\omega_0}^2\pt_{\omega_1}d(\omega)
+\pt_{\omega_0}^2d(\omega) \\
&=\frac{8\omega_0^{1/2}\tilde\kappa_\omega\alpha_{0,\omega}}
{\sigma(1-z_0^2)}\sigma(\sigma-1)(z_0+1)^2(z_0-1)
\ne0.
\end{align*}
By Lemma \ref{sd3},
we conclude $\frac{d^3}{d\lambda^3}d(\omega+\lambda\xi)|_{\lambda=0}\ne0$.
\end{proof}

\section*{Acknowledgements}

The author would like to express his gratitude to Professor Masahito Ohta for helpful advices and encouragements.
The author would also like to thank Professor Masaya Maeda for giving useful notes to him.

\end{document}